\documentclass[11pt]{amsart}

\usepackage{pdfpages}
\usepackage{amssymb}
\usepackage{amsmath}
\usepackage{amsthm}
\usepackage{thm-restate}
\usepackage{graphicx}
\usepackage{float}
\graphicspath{ {./pics/} }

\newtheorem{theorem}{Theorem}
\newtheorem{corollary}[theorem]{Corollary}
\newtheorem{lemma}[theorem]{Lemma}

\newtheorem{proposition}[theorem]{Proposition}

\newtheorem*{theorem*}{Theorem}
\newtheorem*{remark}{Remark}

\numberwithin{theorem}{section} 
\numberwithin{corollary}{section}
\numberwithin{lemma}{section}
\numberwithin{proposition}{section}
\numberwithin{definition}{section}

\begin{document}

\title{Boundary and Interior of Root Sets of Power Series}

\author{Jacob Kewarth}
\address{University of Toronto, 40 St. George Street, M5S 2E4, Toronto, Canada}
\email{jacob.kewarth@mail.utoronto.ca}

\begin{abstract}
In this paper we study the set $\mathcal{M}_M$ of roots of power series with coefficients in $\{0, \pm 1, ..., \pm M \}$ and constant coefficient $1$. We prove that every nonreal root in the unit disk of a polynomial of this form is an interior point of $\mathcal{M}_M$. Moreover we establish a dimension formula and show the corresponding Hausdorff measure is finite and positive for a family of attractors parameterized by boundary parameters $\lambda \in \partial \mathcal{M}_M \setminus \mathbb{R}$. Finally we show that $\partial \mathcal{M}_M$ has Lebesgue measure zero.
\end{abstract}

\maketitle


\section{Introduction}

Let $M \geqslant 1$ be an integer and let $\mathbb{D} = \{ |z| < 1 \}$ denote the unit disk. Define the sets:
\[
E_M = \{ -M, -(M-1), ..., 0, ..., M-1, M\},
\]
and:
\[
\mathcal{M}_M = \{ \lambda \in \mathbb{D} \mid 1 + \sum_{j=1}^\infty p_j \lambda^j = 0 \text{ for some } p_j \in E_M \}.
\]

Such sets have been well studied both for the classical case $M=1$ (see \cite{BHa}, \cite{Bo1}, \cite{Bo2}, \cite{Ba}, \cite{SX}, \cite{So2}, \cite{CKW}) and also for more general $M$ (see \cite{So1}, \cite{EJS}, \cite{EJ}). See also \cite{BHu}, \cite{Na}, \cite{BY}, \cite{HI}, \cite{Ke}, \cite{BBBP1}, \cite{BBBP2}, \cite{Ti} among others for related sets.

A central question has been to understand the interior points of $\mathcal{M}_M$. Since the intersection of $\mathcal{M}_M$ with a ball of sufficiently small radius yields two real intervals, interior points cannot be dense in $\mathcal{M}_M$. Still computer images suggest that interior points are dense in $\mathcal{M}_M \setminus \mathbb{R}$. For $M=1$ this was conjectured by Bandt \cite{Ba} and proven in \cite{CKW}.

A stronger conjecture is that all nonreal roots in $\mathbb{D}$ of polynomials with coefficients in $E_M$ and constant coefficient $1$ are interior points of $\mathcal{M}_M$. For the $M=1$ case \cite{SX} proved there is a region of $\mathcal{M}_M$ for which nonreal polynomial roots are interior points. This result was generalized to all $M$ in \cite{EJS} and \cite{EJ} where it was shown that for $M \geqslant 19$, all nonreal polynomial roots are interior points. For smaller $M$, including $M=1$, this remains unknown. In this paper we prove both conjectures for all $M \geqslant 1$.

These sets also admit a dynamical interpretation. Indeed, let:
\[
T_M = \{ 0, 1, ..., M \}.
\]
Note that $E_M = T_M - T_M$.

Then we define a family of iterated function systems (IFS) parameterized by $\lambda \in \mathbb{D}$ by:
\[
f_{s, \lambda}(z) = \lambda z + s ,\quad \text{where $s \in T_M$}.
\]

Since $\lambda \in \mathbb{D}$ these maps are contractions and therefore they have an attractor:
\[
A_{\lambda, M}  = \left\{ \sum_{j=0}^\infty b_j \lambda^j \mid b_j \in T_M \right\}.
\]
When $M$ is understood to be fixed we will often omit it and simply write $A_{\lambda}$.

Then it is well known that $A_{\lambda, M}$ is connected if and only if $\lambda \in \mathcal{M}_M$ (see \cite{EJS}), thus our root set is a connectedness locus for families of linear IFS. This parallels the relationship between the Mandelbrot set and Julia sets in holomorphic dynamics.

The goal of this paper is to leverage techniques from IFS theory to study the interior of $\mathcal{M}_M$ and prove the stronger conjecture for all $M$. Namely:

\begin{theorem}
\label{intthm}
Let $\lambda \in \mathcal{M}_M \setminus \mathbb{R}$. If $\lambda$ is the root of a polynomial $P(z) = 1 + \sum_{j=1}^N p_jz^j$ with $p_j \in E_M$ then $\lambda \in \operatorname{int}\mathcal{M}_M$.
\end{theorem}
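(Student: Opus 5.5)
The plan is to follow the Solomyak--Xu / Calegari--Koch--Walker strategy. The idea is to exhibit, for $\lambda$ near $\lambda_0$, a zero of a power series with coefficients in $E_M$ whose leading part is the fixed polynomial $P$. The key object is the difference set $A_\lambda - A_\lambda = \{\sum_{j\ge 0} e_j \lambda^j : e_j \in E_M\}$, since $E_M = T_M - T_M$. The condition $\lambda \in \mathcal{M}_M$ is equivalent to $0 \in 1 + \lambda(A_\lambda - A_\lambda)$, i.e.\ to $-1/\lambda \in A_\lambda - A_\lambda$. For a polynomial root we have more. Write $P(\lambda_0)=0$ and consider
\[
F(\lambda, w) = P(\lambda) + \lambda^{N+1} w, \qquad w \in A_\lambda - A_\lambda .
\]
If for all $\lambda$ in a neighborhood $U$ of $\lambda_0$ the point $-P(\lambda)/\lambda^{N+1}$ lies in $A_\lambda - A_\lambda$, then $U \subset \mathcal{M}_M$. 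Since $-P(\lambda)/\lambda^{N+1} \to 0$ as $\lambda \to \lambda_0$, it suffices to prove the following.

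\textbf{Key claim.} For nonreal $\lambda_0 \in \mathcal{M}_M$ (or at least for nonreal polynomial roots), $0$ lies in the interior of $A_{\lambda} - A_{\lambda}$, uniformly in $\lambda$ near $\lambda_0$. Concretely, there is $r>0$ such that $B(0,r) \subset A_\lambda - A_\lambda$ for all $|\lambda-\lambda_0|<r$.

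To get the claim I would build a \emph{trap}, in the spirit of CKW. Since $\lambda_0 \in \mathcal{M}_M$, the difference set $D_{\lambda_0} = A_{\lambda_0}-A_{\lambda_0}$ is connected and contains $0$ and $\pm\sum\lambda_0^j$-type points. The set $D_\lambda$ satisfies the self-similarity
\[
D_\lambda = \bigcup_{e \in E_M} (e + \lambda D_\lambda).
\]
Because $\lambda$ is nonreal, multiplication by $\lambda$ rotates by a nonzero angle. The copies $e + \lambda D$ for $e = -M,\dots,M$ are translates by integers of a connected set containing $0$ and a point of size comparable to $M/(1-|\lambda|)$. The plan is to find finitely many words $e_1\cdots e_k$ such that the sub-pieces $\sum_{i<k} e_i\lambda^i + \lambda^k D_\lambda$ form a connected chain of sets. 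This chain should wind around $0$ and enclose a small disk $B(0,r)$ in a simply connected sense. By self-similarity, the bounded complementary component then fills in. A standard argument shows that if a compact set $K$ with $K = \bigcup \phi_e(K)$ contains a closed curve whose winding number about a point is nonzero, and the pieces overlap, then iterating produces every point inside. Equivalently, I would show that the set of $\lambda$ for which $0\in \operatorname{int} D_\lambda$ is open, and that this holds at $\lambda_0$. The openness is automatic from the trap being stable: finitely many connected pieces with overlaps, each depending continuously on $\lambda$ in the Hausdorff metric.

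Finally, the local part near $\lambda_0$ needs care. We need $0 \in \operatorname{int} D_{\lambda_0}$, not just $0 \in D_{\lambda_0}$. I would use that $P(\lambda_0)=0$ gives $D_{\lambda_0}$ the extra structure of a periodic point. The sequence $1,p_1,\dots,p_N,0,0,\dots$ shows that $0$ is reachable by a finite word followed by the origin. Combining this with connectedness, the rotation by $\arg\lambda_0 \notin \pi\mathbb{Z}$ produces small copies $\lambda_0^{n} D_{\lambda_0}$ through $0$ in rotated directions. These should assemble into a loop around $0$.

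\textbf{Main obstacle.} The hardest step is producing this loop around $0$ for \emph{small} $M$, especially $M=1$, where the pieces are thin and rotation alone does not obviously give a winding curve. Here I would try first the CKW idea: show $D_{\lambda_0}$ contains a nondegenerate connected set through $0$ that is not contained in a line. Then choose two words whose images of it through $0$ cross transversally at angles differing by $\arg\lambda_0$. Concatenating rotated copies $\lambda_0^{jm}$ for suitable $m$ closes the loop. Turning this into a rigorous, $\lambda$-uniform trap is where I expect the real work to lie.
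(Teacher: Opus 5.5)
\textbf{There is a genuine gap: the Key claim carries the whole theorem, and your plan does not prove it.} Your reduction is correct: if $B(0,r)\subseteq D_\lambda$ for all $\lambda$ near $\lambda_0$, then $-P(\lambda)/\lambda^{N+1}\in D_\lambda$, so $\lambda\in\mathcal{M}_M$. This is the Solomyak--Xu scheme. But for the Key claim you give a plan rather than an argument, and you say yourself that you cannot build the loop for small $M$.

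Two principles you intend to rely on are false as stated.
\begin{enumerate}
\item A closed curve inside a self-similar set with overlapping pieces, having nonzero winding number about a point, does not force the enclosed region into the set. The Sierpi\'nski gasket contains the boundary of its outer triangle but not the centroid.
\item $0\in\operatorname{int}D_\lambda$ is not an open condition in $\lambda$. So it cannot come ``automatically'' from Hausdorff continuity of finitely many pieces; nonempty intersections of pieces are not stable under Hausdorff-small perturbations. For example, take $M=1$ and $\lambda_1=i/\sqrt3$. Then $\lambda_1^2=-1/3$, so $D_{\lambda_1}=X+\lambda_1X$ with $X=[-\tfrac32,\tfrac32]$ (balanced ternary), a rectangle with $0$ in its interior. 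For $\lambda=t\lambda_1$ with $0<t<1$, the relation $m(D_\lambda)\leqslant 3|\lambda|^2m(D_\lambda)$ forces $m(D_\lambda)=0$.
\end{enumerate}
Your Key claim is also stronger than anything the paper proves. It must hold at the parameters $\lambda\notin\mathcal{M}_M$ near $\lambda_0$, where $A_\lambda$ is disconnected. The paper deliberately never proves anything about $D_\lambda$ at such $\lambda$.

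\textbf{What the paper does instead.} It proves something weaker, in two steps, and neither needs a trap.
\begin{enumerate}
\item It obtains a uniform ball $B(0,\epsilon)\subseteq D_\eta$ only for $\eta\in\mathcal{M}_M$ near $\lambda_0$, so that $A_\eta$ is connected. It does this by contradiction, not construction. Failure gives $\eta_n\to\lambda_0$ and $v_n\to0$ with $A_{\eta_n}\cap(A_{\eta_n}+v_n)=\emptyset$. A planar lemma says a compact connected set disjoint from a small translate of itself cannot have points on both sides of a box it avoids. Hence every line parallel to $u=\lim v_n/|v_n|$ meets $A_{\lambda_0}$ in a connected set. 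Then $\lambda_0A_{\lambda_0}\subseteq A_{\lambda_0}$ with $\lambda_0\notin\mathbb{R}$ forces $\operatorname{int}A_{\lambda_0}\neq\emptyset$. This is impossible because $m(A_{\lambda_0})=0$:
\begin{itemize}
\item if $|\lambda_0|<1/\sqrt{M+1}$, by the scaling bound;
\item if $|\lambda_0|=1/\sqrt{M+1}$, because $P(\lambda_0)=0$ makes two distinct level-$(N+1)$ cylinders coincide, while distinct cylinders overlap only in measure zero.
\end{itemize}
\item It reaches $\mu\notin\mathcal{M}_M$ near $\lambda_0$ by a nearest-point Rouch\'e argument. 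Take $\lambda\in\mathcal{M}_M$ closest to $\mu$, and choose $H$ with coefficients in $E_M$ and $H(\lambda)=-P(\mu)/\mu^{N+1}$. Apply Rouch\'e to $F=P+z^{N+1}H$ on $B(\mu,\operatorname{dist}(\mu,\mathcal{M}_M)/2)$. This uses a lower bound on $|P'|$ near $\lambda_0$, which comes from simplicity of nonreal roots (Jensen plus Parseval). The result is a root of $F$, hence a point of $\mathcal{M}_M$, closer to $\mu$ than $\lambda$, a contradiction.
\end{enumerate}
This is why the argument works for every $M$, including $M=1$.
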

As a consequence of theorem \ref{intthm}, every nonreal element of $\mathcal{M}_M$ lies in the closure of its interior.

In fact the main estimate we establish in this paper also gives a useful tool for studying the boundary of $\mathcal{M}_M$. Indeed in theorem~\ref{separationthm} we show for any nonreal boundary point $\lambda$ there is a constant $c_\lambda >0$ such that for any polynomial $P(z)$ of degree $N$ with coefficients in $E_M$ and constant coefficient $1$:
\[
|P(\lambda)| \geqslant c_\lambda |\lambda|^{N+1}.
\] 

This estimate has further applications for studying the geometry of the attractors $A_\lambda$ for $\lambda \in \partial \mathcal{M}_M \setminus \mathbb{R}$. In particular we further show:

\begin{theorem}
\label{hthm}
For any $\lambda \in \partial \mathcal{M}_M \setminus \mathbb{R}$, we have for $s(\lambda) = \frac{\log{(M+1)}}{-\log{|\lambda |}}$:
\[
0 < \mathcal{H}^{s(\lambda)}(A_\lambda) < \infty, \quad \dim_H(A_\lambda) = s(\lambda).
\]

Moreover, for such parameters the open set condition (OSC) holds.
\end{theorem}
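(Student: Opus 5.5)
The plan is to use the separation estimate of theorem~\ref{separationthm}: for $\lambda \in \partial \mathcal{M}_M \setminus \mathbb{R}$ there is $c_\lambda>0$ with $|P(\lambda)| \geqslant c_\lambda |\lambda|^{N+1}$ for every polynomial $P$ of degree $N$ with coefficients in $E_M$ and constant coefficient $1$. This will be converted into a uniform separation of cylinder sets of $A_\lambda$. That in turn gives the weak separation / open set condition, and the Hausdorff measure statement then follows from Schief's theorem.

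\textbf{Step 1 (separation of cylinders).} For words $u=b_0\cdots b_{n-1}$ and $v=b_0'\cdots b_{n-1}'$ in $T_M^n$, write $f_u = f_{b_0,\lambda}\circ\cdots\circ f_{b_{n-1},\lambda}$. Suppose $u\neq v$ and let $k$ be the first index where they differ. Then
\[
f_u(0)-f_v(0) = \lambda^k \Big( (b_k-b_k') + \sum_{j=k+1}^{n-1} (b_j-b_j')\lambda^{j-k}\Big).
\]
Dividing by the nonzero integer $b_k-b_k'$ does not keep coefficients in $E_M$ in general. I would instead invoke theorem~\ref{separationthm} in the form in which it is presumably proved: for polynomials with leading coefficient $p_0\in E_M\setminus\{0\}$, i.e. a bound $|Q(\lambda)|\geqslant c_\lambda|\lambda|^{N+1}$ for $Q=p_0+\sum p_j z^j$. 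If only the $p_0=1$ version is available, I would reduce to it by noting that $|p_0|\le M$ and running the same argument with $E_{2M}$-type bounds. This yields
\[
|f_u(0)-f_v(0)| \geqslant c_\lambda |\lambda|^{n}.
\]
Since $f_u(A_\lambda)$ and $f_v(A_\lambda)$ have diameter $|\lambda|^n \operatorname{diam} A_\lambda$, it follows that the points $f_u(0)$ for $|u|=n$ are $c_\lambda|\lambda|^n$-separated.

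\textbf{Step 2 (OSC).} All maps have the same contraction ratio $|\lambda|$ and the same rotation. The separation in Step 1 therefore implies that the identity map is isolated in $\{f_u^{-1}f_v : |u|=|v|,\, u\neq v\}$. This is because $f_u^{-1}f_v(z)=z+\lambda^{-n}(f_v(0)-f_u(0))$ is a translation by at least $c_\lambda$. By Bandt--Graf (equivalently Schief), this isolation is equivalent to the OSC for self-similar sets in $\mathbb{R}^2$. More concretely, in a ball of radius $r|\lambda|^n$ only boundedly many level-$n$ cylinders can meet, by a volume-packing argument on the separated points. That is exactly the Bandt--Graf criterion.

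\textbf{Step 3 (measure and dimension).} Under OSC, Hutchinson's theorem gives $0<\mathcal{H}^s(A_\lambda)<\infty$, where $s$ solves $(M+1)|\lambda|^s=1$, i.e. $s=s(\lambda)$. Hence $\dim_H A_\lambda=s(\lambda)$.

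\textbf{Main obstacle.} The main obstacle is Step 1: making sure the separation estimate applies to differences whose first nonzero coefficient is not $\pm1$. I would handle this by re-examining the proof of theorem~\ref{separationthm}. That proof relies on $\lambda\in\partial\mathcal{M}_M$ being approached by non-roots, so it should work for any nonzero constant term in $E_M$. This would use the same compactness argument on limits of normalized power series, with the constant coefficient ranging over $E_M\setminus\{0\}$.
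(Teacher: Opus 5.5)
You correctly located the weak point, but your proposed fix does not work. Steps 2 and 3 are fine: once $|f_u(0)-f_v(0)|\geqslant C|\lambda|^n$ holds for all distinct words of length $n$, Bandt--Graf plus Schief give $\mathcal{H}^{s(\lambda)}(A_\lambda)>0$ and the OSC. (The paper instead does a direct packing count in Lemma~\ref{BGlemma}, then Schief.) The upper bound is trivial.

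The problem is Step 1. For $M\geqslant 2$ the first nonzero coefficient $d=b_k-b_k'$ can satisfy $|d|\geqslant 2$, and Theorem~\ref{separationthm} says nothing about $d+\sum_{j\geqslant 1}p_j\lambda^j$. Your justification (that the proof ``relies on $\lambda$ being approached by non-roots, so it should work for any nonzero constant term'') is backwards. In Proposition~\ref{intprop} the Rouch\'e argument produces a zero of $F=P+z^{N+1}H$, and $F$ inherits the constant term of $P$. Only when that constant term is $1$ does the zero lie in $\mathcal{M}_M$, and that is what contradicts $\lambda\in\partial\mathcal{M}_M$ in Proposition~\ref{boundryprop}. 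With constant term $d\geqslant 2$ you would only learn that points near $\lambda$ are zeros of series with constant term $d$, which contradicts nothing. The ``compactness argument on limits'' merely reduces the problem to showing that no series $d+\sum_{j\geqslant1}p_jz^j$ vanishes at $\lambda$; it does not prove it.

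The $E_{2M}$ fallback also has no content. Dividing by $p_0$ leaves the integers. Applying Theorem~\ref{separationthm} with digit bound $2M$ would require $\lambda\in\partial\mathcal{M}_{2M}$, which typically fails: any $\lambda$ with $|\lambda|>1/\sqrt{2M+1}$ is an interior point of $\mathcal{M}_{2M}$ by Lemma~\ref{annuluslemma}.

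The missing ingredient is Lemma~\ref{BGhelp}, which does not use that $\lambda$ is a boundary point at all. Suppose $M\geqslant 2$, $\lambda$ is nonreal with $|\lambda|\leqslant 1/\sqrt{M+1}$ (true on $\partial\mathcal{M}_M$ by Lemma~\ref{annuluslemma}), and $d\geqslant 2$. Then no power series $d+\sum_{j\geqslant1}p_jz^j$ with $p_j\in E_M$ vanishes at $\lambda$. The proof applies Jensen's formula on $|z|=1/\sqrt{2}$ to the conjugate pair of zeros $\lambda,\overline{\lambda}$, then concavity of $\log$ and Parseval. This yields $\frac{(M+1)^2}{4}\leqslant 1+\frac{M^2}{d^2}\leqslant 1+\frac{M^2}{4}$, i.e.\ $2M+1\leqslant 4$, which is impossible.

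Compactness of the coefficient space then gives $a>0$ with $|d+\sum_{j=1}^N p_j\lambda^j|\geqslant a$, uniformly in $N$. Hence $|f_u(0)-f_v(0)|\geqslant a|\lambda|^k\geqslant a|\lambda|^n$ in this case. With $C=\min(c_\lambda,a)$ your Step 1 holds and the rest of your argument goes through. For $M=1$ the issue never arises, since $E_1=\{0,\pm1\}$, so your proof is complete there as written.
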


Note this dimension formula is straightforward when $A_\lambda$ is totally disconnected, in which case $\lambda \notin \mathcal{M}_M$. On the other hand, for nonreal $\lambda$ whose modulus is close to $1$, $A_\lambda$ will have nonempty interior and so $\dim_H A_\lambda$ will be $2$.

It is interesting to compare theorem \ref{hthm} with the results in \cite{SX}. For $M=1$, they showed that for Lebesgue almost every $\lambda \in \mathcal{M}_1$ with $|\lambda| < 1/\sqrt{2}$, one has $\dim_H(A_\lambda) = s(\lambda)$ but $\mathcal{H}^{s(\lambda)}(A_\lambda) = 0$. Our result shows that, at every nonreal boundary parameter, the dimension formula still holds, but the corresponding Hausdorff measure is always positive.

For the dimension formula, \cite{SS}
subsequently proved that, for every fixed $M \geqslant 1$,
$\dim_H A_\lambda = \min\{2, s(\lambda)\}$ for all nonreal
$\lambda \in \mathbb{D}$ outside an exceptional set of zero
packing dimension.

For related families of fractal $n$-gons, \cite{BHu}
proved the OSC at every boundary parameter when $n > 2$, $n \neq 4$, and explicitly left the $n=2$ case open.
Note their $n=2$ connectedness locus is the same as our $\mathcal{M}_1$.

Combining the result of \cite{SX} with our theorem \ref{hthm} gives that $\partial \mathcal{M}_1$ has Lebesgue measure zero. In fact, the estimates developed here give that for all $M$:

\begin{theorem}
\label{thm3}
For every $M \geqslant 1$, the boundary $\partial \mathcal{M}_M$ has zero Lebesgue measure.
\end{theorem}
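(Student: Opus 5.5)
The plan is to show that $\partial\mathcal{M}_M$ is porous away from the real axis, that is, every nonreal boundary point has nearby holes of comparable size; Lebesgue density then gives measure zero. The real part $\partial\mathcal{M}_M \cap \mathbb{R}$ is contained in a line and has measure zero, so it suffices to treat $\partial\mathcal{M}_M \setminus \mathbb{R}$. Cover this set by countably many compact pieces $K_\delta = \{\lambda \in \partial\mathcal{M}_M : |\operatorname{Im}\lambda| \geqslant \delta,\ |\lambda| \leqslant 1-\delta\}$. We will show that each $K_\delta$ has no Lebesgue density points.

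The key input is the separation estimate of Theorem~\ref{separationthm}: for $\lambda \in \partial\mathcal{M}_M\setminus\mathbb{R}$ and every polynomial $P$ of degree $N$ with coefficients in $E_M$ and constant term $1$, we have $|P(\lambda)| \geqslant c_\lambda|\lambda|^{N+1}$. The first step is to make $c_\lambda$ uniform on $K_\delta$. To do this I would inspect the proof of Theorem~\ref{separationthm} and check that the constant depends only on $\delta$. Failing that, I would use a Baire-category or compactness argument: the sets $\{\lambda\in K_\delta : c_\lambda \geqslant 1/n\}$ are closed (the inequality is a closed condition for each fixed $P$) and cover $K_\delta$. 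It then suffices to show that each such set has measure zero, so we may assume a uniform constant $c>0$.

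The porosity step goes as follows. Fix $\lambda \in K_\delta$ with constant $c$, and a scale $N$. Pick a power series $1+\sum p_j\lambda^j=0$ witnessing $\lambda\in\mathcal{M}_M$, truncate it at degree $N$ to get $P_N$, and note that $|P_N(\lambda)| \leqslant C|\lambda|^{N+1}$ with $C = M/(1-|\lambda|)$. The lower bound says $P_N$ has no zeros within distance roughly $c|\lambda|^{N+1}/\sup|P_N'|$ of $\lambda$. More usefully, consider all "tails" $z^{N+1}g(z)$ with $g$ a power series with coefficients in $E_M$. The zero set of $P_N(z)+z^{N+1}g(z)$ near $\lambda$ is governed by the ratio $P_N(z)/z^{N+1}$, which is a function of size comparable to $1$, bounded below by $c$ on the boundary. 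On a disk $D$ of radius $r|\lambda|^N$ around $\lambda$ (with $r$ small depending only on $\delta$, $M$, $c$), the derivative bound $|P_N'| \lesssim 1$ shows $P_N/z^{N+1}$ varies by $O(r)$. I then want to locate inside $D$ a subdisk of radius comparable to $r|\lambda|^N$ consisting of points $z$ that are not in $\mathcal{M}_M$. Applying the separation estimate at such boundary points, the idea is this: along the direction where $|P_N(z)/z^{N+1}|$ increases past $\sup_g|g|$ on $A$-type sets, the point $z$ cannot be a root. This uses the convexity/compactness of the set $\{-g(z)\}$, which is essentially $A_z - A_z$.

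This is the main obstacle. The lower bound at $\lambda$ alone does not immediately produce a hole, because the set $A_z-A_z$ of attainable tail values can be large, even containing a neighborhood of $0$. I would try instead to exploit that $\lambda$ is a boundary point: arbitrarily close to $\lambda$ there exist points $\mu\notin\mathcal{M}_M$. Combined with the uniform estimate, self-similarity lets me rescale. The distance from $0$ to $A_\mu-A_\mu$ is controlled by $c$, so by continuity the complement contains a disk of radius $\gtrsim c\,|\lambda|^N$ near $\mu$, for $\mu$ within $|\lambda|^N$ of $\lambda$. This gives porosity at every scale $|\lambda|^N$ with a uniform ratio, hence $K_\delta$ has no density points and so it is Lebesgue null. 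Summing over $\delta$ and $n$ finishes the proof.
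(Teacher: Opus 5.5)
Your reductions are fine: the real axis is null, and the sets $\{\lambda\in K_\delta : c_\lambda\geqslant 1/n\}$ are closed and cover $K_\delta$. But the porosity step, which is the entire content of the theorem, is not established, and it looks for the wrong kind of hole. You want disks of radius comparable to $|\lambda|^{N}$ inside $\mathbb{D}\setminus\mathcal{M}_M$, and nothing you invoke produces them.

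The separation estimate only keeps the numbers $Q(z)/z^{N+1}$ a distance of order $c$ from $0$, for polynomials $Q$ of degree $N$ with constant term $1$ and $z$ very close to $\lambda$. To exclude $z$ from $\mathcal{M}_M$ you need $-Q(z)/z^{N+1}\notin D_z$ for every such $Q$, and $D_z$ is a large set that need not be convex. A lower bound is perfectly compatible with membership: at $z=\lambda$ the estimate holds, yet $-P_N(\lambda)/\lambda^{N+1}=\sum_{j>N}p_j\lambda^{j-N-1}\in D_\lambda$.

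Your fallback is not meaningful as written. Since $0\in A_\mu-A_\mu$ always, that distance is identically zero. The quantity that decides $\mu\notin\mathcal{M}_M$ is $\operatorname{dist}(-1,\mu D_\mu)$, and nothing bounds it below in terms of $c$. Exterior points $\mu$ near $\lambda$ exist, but nothing prevents them all from lying extremely close to $\mathcal{M}_M$ at the relevant scale. Openness of the complement gives disks of no controlled radius. Quantitative exterior porosity is a genuinely harder statement that the proposal does not supply.

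The missing idea is to take the holes on the interior side. Your first move is exactly right: truncate a series $P$ with $P(\lambda)=0$ to $P_N$, so that $|P_N(\lambda)|\leqslant\frac{M}{1-|\lambda|}|\lambda|^{N+1}$. Then continue as follows.

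By Lemma~\ref{simplelemma}, $d=|P'(\lambda)|>0$. So for large $N$, $|P_N'-P'(\lambda)|<d/2$ on a fixed small disk around $\lambda$, and $P_N$ has a root $\lambda_N$ with $|\lambda_N-\lambda|\leqslant C|\lambda|^{N+1}$. On $B(\lambda_N,a|\lambda|^{N+1})$, with $a$ small relative to the $\epsilon$ of Proposition~\ref{intprop}, one gets $|P_N(z)|<\epsilon|z|^{N+1}$ and $|P_N'(z)|>d/4$. That proposition (Rouch\'e combined with the locally uniform neighbourhood of $0$ in $D_\eta$ from Corollary~\ref{cor1}) then puts the whole ball in $\operatorname{int}\mathcal{M}_M$, hence off $\partial\mathcal{M}_M$.

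This is Lemma~\ref{m0lemma}. It shows $\partial\mathcal{M}_M$ fills at most a fraction $1-a^2/(C+a)^2$ of $B(\lambda,(C+a)|\lambda|^{N+1})$ for all large $N$. The constants depend only on $\lambda$, which is all the Lebesgue density theorem needs.

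The argument requires $|\lambda|<\frac{1}{\sqrt{M+1}}$, so that $\operatorname{int}A_\lambda=\emptyset$ by Lemma~\ref{zeroarealemma}. The circle $|\lambda|=\frac{1}{\sqrt{M+1}}$, which your $K_\delta$ includes, is discarded as a null set. Neither Theorem~\ref{separationthm} nor uniformity of $c_\lambda$ over $K_\delta$ is needed.
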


\medskip
\noindent\textbf{AI Disclosure.}
The author used ChatGPT (OpenAI) to assist with mathematical
discussions and editing during the preparation of this paper.
The final draft was entirely written by the author who takes full responsibility for its contents.

\section{Preliminaries}

Here we collect some basic properties about these families. First note that since $\mathcal{M}_M$ lives in the unit disk $\mathbb{D}$, anytime we reference $\partial \mathcal{M}_M$ we mean the boundary relative to $\mathbb{D}$ rather than relative to $\mathbb{C}$. Relative to $\mathbb{D}$ it is well known that $\mathcal{M}_M$ is closed.

From \cite{So1} we have:
\begin{lemma}
\label{annuluslemma}
$\mathcal{M}_M$ contains the annulus $\frac{1}{\sqrt{M+1}} \leqslant |\lambda| < 1$.
\end{lemma}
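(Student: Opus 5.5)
The plan is to use the difference–set characterization of $\mathcal{M}_M$. Put
\[
D_\lambda = A_\lambda - A_\lambda = \Big\{ \sum_{j=0}^\infty e_j \lambda^j \;\Big|\; e_j \in E_M \Big\}.
\]
Then $\lambda \in \mathcal{M}_M$ if and only if $1 + \lambda x = 0$ for some $x \in D_\lambda$. Since $D_\lambda = -D_\lambda$, this is equivalent to $1/\lambda \in D_\lambda$. So it suffices to show that for $\frac{1}{\sqrt{M+1}} \leqslant |\lambda| < 1$ the set $D_\lambda$ contains a closed disk $\overline{B(0,R)}$ with $R \geqslant 1/|\lambda|$.

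\emph{Real case.} Here $D_\lambda$ is the interval $[-M/(1-|\lambda|),\, M/(1-|\lambda|)]$. Indeed, the intervals $\lambda I + e$ for $e \in E_M$ overlap as soon as $2M|\lambda|/(1-|\lambda|) \geqslant 1$, and then they cover $I$. Moreover $M/(1-|\lambda|) \geqslant 1/|\lambda|$ whenever $|\lambda| \geqslant 1/(M+1)$, which is weaker than our hypothesis. So I would dispose of real $\lambda$ first.

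\emph{Nonreal case.} I would use the standard self-similarity criterion: if a compact set $K$ satisfies
\[
K \subset \bigcup_{e \in E_M} (\lambda K + e),
\]
then $K \subset D_\lambda$, since every point of $K$ gets an infinite address. A single step gives only disks of radius $|\lambda| R$ centered on the real segment $[-M, M]$, which cannot cover a disk in the imaginary direction. So I would pass to the second iterate. The relevant digit set is $E_M + \lambda E_M$, a $(2M+1)\times(2M+1)$ piece of the lattice $\mathbb{Z} + \lambda\mathbb{Z}$, and the contraction is $\lambda^2$. I would take $K = \overline{B(0,R)}$ and require two conditions:
\begin{enumerate}
\item the disks of radius $|\lambda|^2 R$ centered at the grid points cover the parallelogram spanned by the grid, which needs $|\lambda|^2 R$ at least the covering radius of the lattice cell with sides $1$ and $\lambda$;
\item the grid, enlarged by $|\lambda|^2 R$, contains $B(0,R)$, which needs roughly $M\min(1, |\lambda| \sin\arg\lambda) + |\lambda|^2 R \geqslant R$.
\end{enumerate}
The heuristic behind the threshold is an area count. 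There are $(2M+1)^2$ pieces scaled by $|\lambda|^4$ in area, but the pieces come in overlapping pairs, so the effective count is $(M+1)^2$ per contraction $|\lambda|^4$. This gives exactly $|\lambda|^2 \geqslant 1/(M+1)$.

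\emph{Main obstacle.} The hard step is making the covering argument uniform in $\arg\lambda$. When $\lambda$ is close to the real axis, the lattice cell degenerates and a disk $K$ is the wrong shape. I would instead take $K$ to be the parallelogram $\{a + b\lambda : |a|, |b| \leqslant c\}$, or the hexagon adapted to the lattice, and tune $c$ so that the covering inclusion holds. I expect the borderline case $|\lambda|^2 = 1/(M+1)$ to require a compactness/closedness argument: approximate $\lambda$ from outside by parameters with $|\lambda_n| > 1/\sqrt{M+1}$, and use that $\mathcal{M}_M$ is closed in $\mathbb{D}$.

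Finally, checking $R \geqslant 1/|\lambda|$ in the chosen $K$ gives $-1/\lambda \in D_\lambda$, hence $\lambda \in \mathcal{M}_M$.
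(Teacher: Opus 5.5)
The paper does not prove this lemma; it quotes it from \cite{So1}. Your proposal therefore has to stand on its own, and it does not yet do so.

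The parts you actually establish are correct but routine:
the equivalence $\lambda\in\mathcal{M}_M\iff 1/\lambda\in D_\lambda$;
the real case;
the criterion $K\subset\bigcup_e(\lambda K+e)\Rightarrow K\subset D_\lambda$, together with its second-iterate version;
and the passage to the inner circle using closedness of $\mathcal{M}_M$ in $\mathbb{D}$.

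The nonreal case, which is the whole content of the lemma, is only announced. Conditions (1)--(2) are never checked, and neither is the parallelogram or hexagon fallback. The area count is not an argument: area cannot certify a covering, because the binding constraints sit at the boundary of $K$. At $\lambda=\pm i/\sqrt{M+1}$ the point $1/\lambda$ already lies on $\partial D_\lambda$, so there is no room to spare.

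In fact both shapes you name fail on large parts of the annulus, not only near $\mathbb{R}$. Put $\theta=\arg\lambda\in(0,\pi)$, $q=|\lambda|^2$ and $x=2\operatorname{Re}\lambda$.

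\emph{Disk.} Every piece $\lambda^2\overline{B(0,R)}+e+e'\lambda$ lies in $\{|\operatorname{Im}z|\le M|\lambda|\sin\theta+qR\}$. Covering $iR$ with $R\ge 1/|\lambda|$ therefore forces $\sin\theta\ge (1-q)/(Mq)$, and the right side tends to $1$ at the inner circle. For $M\ge 3$ the disk fails even at (hence near) $\lambda=i/\sqrt{M+1}$: there (2) gives $R\le\sqrt{M+1}$, while (1) needs $R\ge\frac{1}{2}\sqrt{(M+1)(M+2)}$.

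\emph{Parallelogram.} Take $K_c=\{a+b\lambda:|a|,|b|\le c\}$ and use $\lambda^{-1}=(x-\lambda)/q$ and $\lambda^{-2}=(x^2-q-x\lambda)/q^2$. For $q<1/M$, the condition $1/\lambda\in K_c$ forces $c\ge 1/q>M$. Covering the acute corner ($c(1+\lambda)$ when $x\ge 0$) by some $\lambda^2K_c+e+e'\lambda$ then forces $(c-M)(|x|/q^2+1/q)\le c$. These two requirements are compatible only if $|x|(1-Mq)\le q((M+1)q-1)$, whose right side vanishes at the inner circle.

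For instance, $M=1$ and $\lambda=\frac{3}{4}e^{i\pi/3}$ defeats both shapes. So near $|\lambda|=(M+1)^{-1/2}$ your schemes can at best handle a shrinking neighbourhood of $\pm i|\lambda|$. Consequently the closedness step cannot reach the rest of the inner circle.

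What is missing is one of two things. Either you need a genuinely $\lambda$-adapted $K$, with the inclusion verified up to its boundary uniformly as $|\lambda|\downarrow(M+1)^{-1/2}$. Or you need a covering-free argument. For $M=1$ the classical one runs as follows: if $\lambda\notin\mathcal{M}_1$ then $\lambda A_\lambda\cap(\lambda A_\lambda+1)=\emptyset$, so strong separation holds and $\dim_H A_\lambda=\log 2/(-\log|\lambda|)>2$ for $|\lambda|>1/\sqrt{2}$, which is impossible. For $M\ge 2$ this needs an extra input, since disjointness of adjacent pieces does not obviously separate all $M+1$ pieces.
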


\begin{lemma}
\label{zeroarealemma}
If $|\lambda| < \frac{1}{\sqrt{M+1}}$ then the Lebesgue measure $m(A_{\lambda}) = 0$.
\end{lemma}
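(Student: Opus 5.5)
The plan is to cover $A_\lambda$ by its images under compositions of the maps $f_{s,\lambda}$ and count. Iterating the self-similarity relation $A_\lambda = \bigcup_{s \in T_M} f_{s,\lambda}(A_\lambda)$ exactly $n$ times gives
\[
A_\lambda = \bigcup_{b_0,\dots,b_{n-1}\in T_M} \Big( \sum_{j=0}^{n-1} b_j\lambda^j + \lambda^n A_\lambda \Big).
\]
This expresses $A_\lambda$ as a union of $(M+1)^n$ translates of the set $\lambda^n A_\lambda$. Each such translate has Lebesgue measure $|\lambda|^{2n} m(A_\lambda)$. Note first that $A_\lambda$ is compact, so $m(A_\lambda) < \infty$. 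Subadditivity then gives
\[
m(A_\lambda) \leqslant (M+1)^n |\lambda|^{2n} m(A_\lambda) = \big((M+1)|\lambda|^2\big)^n m(A_\lambda).
\]

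The hypothesis $|\lambda| < \frac{1}{\sqrt{M+1}}$ says exactly that $q := (M+1)|\lambda|^2 < 1$. Hence $m(A_\lambda) \leqslant q^n m(A_\lambda)$ for every $n$. Letting $n\to\infty$, or simply taking $n=1$ and noting $(1-q)\,m(A_\lambda)\leqslant 0$ with $1-q>0$, forces $m(A_\lambda)=0$.

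An alternative formulation of the same argument bounds the diameter instead. The set $A_\lambda$ lies in a disk of radius $R = M/(1-|\lambda|)$, so it is covered by $(M+1)^n$ disks of radius $R|\lambda|^n$. Their total area is $\pi R^2 q^n \to 0$.

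There is no real obstacle here. The only point needing care is that $m(A_\lambda)$ is finite, and this holds because $A_\lambda$ is bounded; the covering-disk version avoids even that issue. No separation or overlap control is needed, since we only need an upper bound.
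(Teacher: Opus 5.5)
Your proof is correct and is essentially the paper's argument: the paper takes $n=1$ in the self-similarity relation and uses subadditivity to get $m(A_\lambda) \leqslant (M+1)|\lambda|^2 m(A_\lambda)$, which is exactly your $n=1$ remark. Your note that $m(A_\lambda)<\infty$ (by compactness) supplies a point the paper leaves implicit, and your covering-disk variant is a valid alternative that avoids needing it.
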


\begin{proof}
Since $A_\lambda = \bigcup_{j=0}^M (\lambda A_\lambda + j)$ we have that:
\[
m(A_\lambda) \leqslant (M+1)|\lambda|^2 m(A_\lambda).
\]
When $|\lambda| < \frac{1}{\sqrt{M+1}}$ we have $(M+1)|\lambda|^2 < 1$ so $m(A_\lambda) = 0$.
\end{proof}

\begin{remark}
The first lemma above gives that points on the boundary of $\mathcal{M}_M$ are all $\leqslant \frac{1}{\sqrt{M+1}}$ in modulus. On the other hand, the previous lemma requires a strict inequality, thus there could be points on the boundary whose attractors have positive area. In fact $\frac{\pm i}{\sqrt{M+1}}$ are two such points, and may very well be the only two (this was shown for $M=1$ in \cite{CW} but is open for general $M$). This potential for boundary points with $|\lambda| = \frac{1}{\sqrt{M+1}}$ will require separate arguments.
\end{remark}

The next result is due to \cite{BBBP1} for the $M=1$ case. Their proof easily generalizes to all $M$ so we quickly sketch the details.
\begin{lemma}
\label{simplelemma}
If $f(z) = 1 + \sum_{j=1}^\infty p_j z^j$ with $p_j \in E_M$ has a nonreal root $\lambda$ such that $|\lambda| \leqslant \frac{1}{\sqrt{M+1}}$ then the root is simple.
\end{lemma}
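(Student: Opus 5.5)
The plan is to compare two estimates for the $L^2$ norm of $f$ on a circle $|z| = R$ with $|\lambda| < R < 1$. One is an upper bound that uses only the coefficient bound $|p_j| \leqslant M$. The other is a lower bound that uses the assumed multiple nonreal root, via Jensen/Blaschke factorization. The point is to show the two bounds are incompatible when $|\lambda| \leqslant 1/\sqrt{M+1}$.

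\emph{Step 1: the root and its conjugate are both double.} Suppose, for contradiction, that $\lambda$ is a nonreal root of $f$ of multiplicity at least $2$. Since $f$ has real coefficients, $\bar\lambda \neq \lambda$ is also a root of multiplicity at least $2$. So $f$ has at least four zeros, counted with multiplicity, in the disk $|z| < R$, each of modulus $|\lambda|$.

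\emph{Step 2: lower bound from the zeros.} Let $B$ be the Blaschke product for the disk of radius $R$ with these four zeros:
\[
B(z) = \left(\frac{R(z-\lambda)}{R^2-\bar\lambda z}\right)^2\left(\frac{R(z-\bar\lambda)}{R^2-\lambda z}\right)^2 .
\]
Then $|B| = 1$ on $|z| = R$, and $F = f/B$ is holomorphic on $|z| \leqslant R$. Since $F(0) = f(0)/B(0)$ and $|B(0)| = |\lambda|^4/R^4$, we get $|F(0)| = R^4/|\lambda|^4$. By the mean value property (or subharmonicity of $|F|^2$),
\[
\frac{1}{2\pi}\int_0^{2\pi} |f(Re^{i\theta})|^2\,d\theta = \frac{1}{2\pi}\int_0^{2\pi} |F(Re^{i\theta})|^2\,d\theta \geqslant |F(0)|^2 = \frac{R^8}{|\lambda|^8} \geqslant (M+1)^4 R^8 .
\]

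\emph{Step 3: upper bound from the coefficients.} By Parseval, using $p_0 = 1$ and $|p_j| \leqslant M$,
\[
\frac{1}{2\pi}\int_0^{2\pi} |f(Re^{i\theta})|^2\,d\theta = 1 + \sum_{j\geqslant 1} p_j^2 R^{2j} \leqslant 1 + \frac{M^2R^2}{1-R^2}.
\]
Writing $t = R^2$, it now suffices to find some $t \in (|\lambda|^2, 1)$ with
\[
(M+1)^4 t^4 (1-t) > 1 - t + M^2 t .
\]
For $M \geqslant 2$, take $t = M/(M+1)$. The inequality then reduces to $M^4 > 1 + M^3$, which holds. For $M = 1$, take $t = 0.8$. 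The left side is $16\cdot 0.4096\cdot 0.2 \approx 1.31$, while the right side is $1$. In both cases $t \geqslant 1/2 > |\lambda|^2$, as required. This contradicts Steps 2 and 3, so the root is simple.

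The step I expect to need the most care is choosing the radius. Using only the single pair $\lambda, \bar\lambda$ would give the factor $(M+1)^2 R^4$, which is too weak. One must therefore use the doubled multiplicity through the fourth power, and then check the inequality separately for small $M$. The case $M = 1$ is borderline for the natural choice $t = M/(M+1)$ and needs its own value of $t$. If any case turned out to be tight, I would sharpen Step 3 by using that at most all coefficients have $|p_j| = M$. Alternatively, I would include the free factor from Jensen's formula coming from any further zeros of $f$.
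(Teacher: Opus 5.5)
Your proof is correct and is essentially the paper's argument: the Blaschke-product-plus-mean-value step gives exactly the inequality $R^8/|\lambda|^8 \leqslant 1 + M^2R^2/(1-R^2)$, which the paper obtains from Jensen's formula, concavity of $\log$, and Parseval. The only difference is the radius: the paper's single choice $R^2 = 4/5$ works for every $M \geqslant 1$, so your case split is unnecessary (also, for $M=1$ one only has $|\lambda|^2 \leqslant 1/2$, so the chain should read $t > 1/2 \geqslant |\lambda|^2$).
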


\begin{proof}
Since the coefficients of $f(z)$ are all real and $\lambda$ is nonreal we know its conjugate $\overline{\lambda}$ is also a root of $f(z)$ and that they both have the same multiplicity, call it $m$. Applying Jensen's formula gives:
\[
2m\log \frac{R}{|\lambda|} \leqslant \frac{1}{2\pi} \int_0^{2\pi} \log |f(Re^{it})| dt.
\]
Concavity implies:
\[
\frac{1}{2\pi} \int_0^{2\pi} \log |f(Re^{it})| dt \leqslant \frac{1}{2} \log(\frac{1}{2\pi} \int_0^{2\pi} |f(Re^{it})|^2dt).
\]
By Parseval's identity we have:
\[
\frac{1}{2\pi} \int_0^{2\pi} |f(Re^{it})|^2dt = 1 + \sum_{j=1}^\infty |p_j|^2 R^{2j} \leqslant 1 + \frac{M^2R^2}{1-R^2}.
\]
Thus we obtain:
\[
2m\log \frac{R}{|\lambda|} \leqslant \frac{1}{2} \log(1 + \frac{M^2R^2}{1-R^2}).
\]
Suppose we had $m \geqslant 2$. Then when $R^2 = \frac{4}{5}$ and $|\lambda| \leqslant \frac{1}{\sqrt{M+1}}$ we have:
\[
(\frac{4(M+1)}{5})^4 \leqslant 1 + 4M^2
\]
which is false for $M \geqslant 1$.

Hence our assumption that $m \geqslant 2$ was wrong so we must have $m = 1$ as wanted.
\end{proof}

Recall that the Hausdorff distance $d_H(K, L)$ of two nonempty compact sets is defined as:
\[
d_H(K, L) = \max \{ \sup_{x \in K} \operatorname{dist}(x, L), \sup_{y \in L} \operatorname{dist}(y, K) \}
\]
where $\operatorname{dist}(x, L) = \inf_{y \in L} |x-y|$ is the usual distance. Then a fairly well known fact about our attractor family is that the attractor depends continuously on $\lambda$ in the Hausdorff metric.

\begin{lemma}
\label{hlemma}
Fix $r \in (0, 1)$. Then for any $|\lambda|, |\eta| \leqslant r$ we have
\[
d_H(A_\lambda, A_\eta) \leqslant \frac{M}{(1-r)^2}|\lambda - \eta|.
\]
In particular, $\lambda \mapsto A_\lambda$ is continuous in Hausdorff distance.
\end{lemma}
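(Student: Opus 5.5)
The plan is to compare the two attractors through a single explicit parametrization. Both sets are images of the same symbol space under the coding map, and the maps for $\lambda$ and for $\eta$ move each point by at most the claimed amount. For a sequence $b = (b_j)_{j \geqslant 0}$ with $b_j \in T_M$, set
\[
\pi_\lambda(b) = \sum_{j=0}^\infty b_j \lambda^j, \qquad \pi_\eta(b) = \sum_{j=0}^\infty b_j \eta^j .
\]
By the description of the attractor in the introduction, $A_\lambda = \pi_\lambda(T_M^{\mathbb{N}})$ and $A_\eta = \pi_\eta(T_M^{\mathbb{N}})$.

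The main step is a pointwise bound, uniform in $b$:
\[
|\pi_\lambda(b) - \pi_\eta(b)| \leqslant \frac{M}{(1-r)^2}|\lambda - \eta|.
\]

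1. Factor each term as $\lambda^j - \eta^j = (\lambda - \eta)\sum_{k=0}^{j-1}\lambda^k \eta^{j-1-k}$.
2. Since $|\lambda|, |\eta| \leqslant r$, this gives $|\lambda^j - \eta^j| \leqslant j r^{j-1}|\lambda - \eta|$.
3. Using $0 \leqslant b_j \leqslant M$, we get
\[
|\pi_\lambda(b) - \pi_\eta(b)| \leqslant M|\lambda-\eta|\sum_{j=1}^\infty j r^{j-1} = \frac{M}{(1-r)^2}|\lambda-\eta|.
\]
The last equality is the derivative of the geometric series, which converges absolutely because $r<1$.

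Next we convert the pointwise bound into the Hausdorff bound.

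1. Given $x \in A_\lambda$, write $x = \pi_\lambda(b)$ for some sequence $b$.
2. Then $\pi_\eta(b) \in A_\eta$, so $\operatorname{dist}(x, A_\eta) \leqslant \frac{M}{(1-r)^2}|\lambda-\eta|$.
3. The same argument with $\lambda$ and $\eta$ exchanged bounds $\sup_{y \in A_\eta}\operatorname{dist}(y, A_\lambda)$.
4. Taking the maximum of the two suprema gives the stated bound on $d_H(A_\lambda, A_\eta)$.

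Continuity of $\lambda \mapsto A_\lambda$ follows directly. Any $\lambda_0 \in \mathbb{D}$ lies in the interior of a closed disk $|\lambda| \leqslant r$ with some $r<1$. On that disk the map is Lipschitz in the Hausdorff metric.

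No step is a real obstacle. The only points needing care are:
\begin{itemize}
\item the identification of $A_\lambda$ with the full image of the coding map, which is the series description given in the introduction;
\item the summation identity $\sum_{j \geqslant 1} j r^{j-1} = (1-r)^{-2}$.
\end{itemize}
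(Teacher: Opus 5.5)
Your proposal is correct and follows essentially the same route as the paper: you use the same coefficient sequence at $\lambda$ and $\eta$, the bound $|\lambda^j-\eta^j|\leqslant j r^{j-1}|\lambda-\eta|$, the sum $\sum_{j\geqslant 1} j r^{j-1}=(1-r)^{-2}$, and symmetry to bound both one-sided distances. Your explicit remark that every $\lambda_0\in\mathbb{D}$ lies inside some closed disk $|\lambda|\leqslant r<1$ is a small but welcome addition that the paper leaves implicit for the continuity claim.
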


\begin{proof}
Let $P(z) = \sum_{j=0}^\infty p_jz^j$ with $p_j \in T_M$. Then $P(\lambda) \in A_\lambda$ and $P(\eta) \in A_\eta$. Using difference of powers we get:
\[
|\lambda^j - \eta^j| \leqslant jr^{j-1}|\lambda - \eta|.
\]
Hence we get:
\[
|P(\lambda) - P(\eta)| \leqslant M|\lambda - \eta| \sum_{j=1}^\infty jr^{j-1} = \frac{M}{(1-r)^2} |\lambda-\eta|.
\]
Thus for any point in $A_\lambda$ there is a point in $A_\eta$ at distance at most $\frac{M}{(1-r)^2}|\lambda - \eta|$ away. Likewise, for any point in $A_\eta$ there is a point in $A_\lambda$ at distance at most $\frac{M}{(1-r)^2}|\lambda - \eta|$ away.
\end{proof}

\section{Difference Sets}

Consider the difference set $D_\lambda = A_\lambda - A_\lambda$. Notice that:
\[
D_\lambda = \left\{ \sum_{j=0}^\infty p_j\lambda^j \mid p_j \in E_M \right\}.
\]
The goal of this section is to prove that $D_\lambda$ contains a neighbourhood of zero whenever  $\operatorname{int} A_\lambda = \emptyset$, $A_\lambda$ is connected, and $\lambda$ is nonreal. Moreover we show that this neighbourhood is locally uniform. This is essentially the same approach as in \cite{SX}.

We begin by establishing some basic geometric lemmas.

\begin{lemma}
\label{gammalemma}
Let $\gamma: [0, 1] \rightarrow \mathbb{C}$ be a continuous map with endpoints satisfying:
\[
\operatorname{Im}(\gamma(0)) = \operatorname{Im}(\gamma(1)), \quad \operatorname{Re}(\gamma(0)) < \operatorname{Re}(\gamma(1)).
\]

Suppose further that for all $t \in (0, 1)$ we have $\operatorname{Im}(\gamma(t)) > \operatorname{Im}(\gamma(0))$. Then for any real number $s \in (0, \operatorname{Re}(\gamma(1)) - \operatorname{Re}(\gamma(0)))$ the images of $\gamma(t)$ and $\gamma(t) + s$ intersect.
\end{lemma}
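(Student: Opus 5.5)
We reduce the statement to a one-dimensional intermediate value argument. Write $a = \operatorname{Re}\gamma(0)$, $b = \operatorname{Re}\gamma(1)$, and translate so that $\operatorname{Im}\gamma(0) = \operatorname{Im}\gamma(1) = 0$. Then $\gamma$ lies in the closed upper half plane and touches the real axis only at its endpoints $a < b$. Fix $s \in (0, b-a)$. We must find $t_1, t_2 \in [0,1]$ with $\gamma(t_1) = \gamma(t_2) + s$.

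The plan is to use a Jordan curve argument. Concatenate $\gamma$ with the segment $[a,b]$ on the real axis. If $\gamma$ is injective, this gives a Jordan curve $J$. In general, $\gamma$ together with $[a,b]$ still separates the plane, and we work with the unbounded component $U$ of the complement of $\gamma([0,1]) \cup [a,b]$ instead of the interior of $J$.

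Consider the translated curve $\gamma + s$. It starts at $a+s \in (a,b)$ and ends at $b + s > b$. Near $t=0$ its points lie strictly above the real axis, close to $a+s$. These points are therefore in the bounded region cut out by $\gamma \cup [a,b]$, or else they are already on $\gamma$, in which case we are done. Its endpoint $b+s$ lies on the real axis outside $[a,b]$, and so lies in $U$.

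Next we check that $\gamma+s$ cannot cross into $U$ through $[a,b]$. For $t \in (0,1)$ we have $\operatorname{Im}(\gamma(t)+s) > 0$, so $\gamma+s$ meets the real axis only at its endpoints. The only point of $[a,b]$ it touches is therefore $a+s$, at $t=0$. Hence the path $t \mapsto \gamma(t)+s$ for $t \in (0,1]$ connects a point of a bounded complementary component to a point of $U$ while avoiding $[a,b]$. By connectedness it must meet $\gamma([0,1])$, which gives the intersection.

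The main obstacle is making rigorous the claim that points just above $a+s$ lie in a bounded component when $\gamma$ is not simple. To handle this, I would first try to avoid Jordan curve theory with a cleaner device. Consider the vertical ray $R = \{a+s + iy : y > 0\}$. The set $\{t : \operatorname{Re}\gamma(t) = a+s\}$ is nonempty by the intermediate value theorem, and any such $t$ lies in $(0,1)$. Then run a winding/parity argument. Alternatively, define the function $g(t_1,t_2) = \gamma(t_1) - \gamma(t_2) - s$ on the square $[0,1]^2$ and show that $0$ lies in its image by a degree computation on the boundary of the square. On the edges $t_2 = 0$ and $t_1 = 1$ the imaginary part is $\geq 0$, while on $t_1 = 0$ and $t_2 = 1$ it is $\leq 0$. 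At the corners the value of $g$ is real: $-s$ at $(0,0)$, $a-b-s$ at $(0,1)$, $b-a-s > 0$ at $(1,0)$, and $-s$ at $(1,1)$. Tracking $\arg g$ around the boundary, the imaginary part is zero only at the corners. This forces winding number $\pm 1$ about $0$, provided $g$ is nonzero on the boundary. If $g$ vanishes somewhere on the boundary, we are already done. Nonzero winding number implies that $g$ has a zero in the square, which is exactly the desired intersection. I would present this degree argument as the main proof, since it handles non-simple $\gamma$ uniformly.
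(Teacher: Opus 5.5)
Your degree argument with $g(t_1,t_2)=\gamma(t_1)-\gamma(t_2)-s$ is a valid proof, and a genuinely different one from the paper's. However, the sign data you wrote down is wrong, and taken literally it would give winding number $0$.

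Since $\operatorname{Im} g(t_1,t_2)=\operatorname{Im}\gamma(t_1)-\operatorname{Im}\gamma(t_2)$, the imaginary part is $\geqslant 0$ on the edges $t_2=0$ and $t_2=1$, and $\leqslant 0$ on the edges $t_1=0$ and $t_1=1$, strictly so away from the corners. You have swapped the edges $t_1=1$ and $t_2=1$. Go around the boundary counterclockwise with your assignment:
the edge $t_2=0$ carries $g$ from $-s$ to $b-a-s$ through the upper half plane ($\Delta\arg=-\pi$);
the edge $t_1=1$ would carry it back to $-s$ through the upper half plane ($\Delta\arg=+\pi$);
the other two edges join negative reals within a half plane ($\Delta\arg=0$).
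The total is $0$. With the correct signs, the edge $t_1=1$ passes through the lower half plane and contributes a second $-\pi$, so the winding number is $-1$.

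So the fact that $\operatorname{Im} g$ vanishes only at the corners does not by itself force $\pm 1$. You must actually do this bookkeeping, and it is exactly where $s>0$ (giving $g(0,0)=g(1,1)=-s<0$) and $s<b-a$ (giving $g(1,0)>0$) are used. Note also that $g$ never vanishes on $\partial[0,1]^2$, so your case split there is unnecessary. Nonzero winding then yields a zero in the open square, i.e. $\gamma(t_1)=\gamma(t_2)+s$.

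The paper argues instead by planar separation. In the closed half plane $H=\{\operatorname{Im} z\geqslant\operatorname{Im}\gamma(0)\}$, it asserts that $\gamma(0)+s$ and $\gamma(1)+s$ lie in different components of $H\setminus\gamma([0,1])$. Since $\gamma+s$ is a path in $H$ joining them, it must meet $\gamma$. This is essentially your first sketch, with the half plane in place of the segment $[a,b]$.

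The paper leaves the separation claim unjustified, which is precisely the obstacle you flagged for non-simple $\gamma$. It can be filled by extracting a simple arc from $\gamma([0,1])$ and applying the Jordan curve theorem to that arc together with the segment. Your route gives a self-contained proof that avoids the Jordan curve theorem and handles non-simple $\gamma$ uniformly. It needs only that a loop in $\mathbb{C}\setminus\{0\}$ which extends continuously over the square has winding number zero. The paper's route is shorter and more geometric.
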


\begin{proof}
Consider the half plane given by:
\[
H = \{ z \in \mathbb{C} \mid \operatorname{Im}(z) \geqslant \operatorname{Im}(\gamma(0)) \}.
\]
Since both endpoints of $\gamma$ lie on the boundary of $H$, we have that $H \setminus \gamma([0, 1])$ contains at least two connected components. By construction, we have that the endpoints of $\gamma(t) + s$ lie in different components, hence the images of $\gamma(t)$ and $\gamma(t) + s$ intersect.
\end{proof}

\begin{lemma}
\label{squarelemma}
Let $K \subseteq \mathbb{C}$ be a compact connected set. Let $0 < s < 2a$ be such that $K \cap (K + s) = \emptyset$. Consider a rectangle:
\[
Q = \{z \in \mathbb{C} \mid |\operatorname{Re}(z) - c| \leqslant a, |\operatorname{Im}(z) - h| \leqslant b \}.
\]
If $Q \cap K = \emptyset$ then $K$ contains at most one of $z_- = x_- + iy_-$ and $z_+ = x_+ + iy_+$ where:
\[
x_- < c-a, |y_- - h| < b, \quad \text{and } \quad  x_+ > c+a, |y_+ - h| < b.
\]
\end{lemma}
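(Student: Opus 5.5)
We argue by contradiction: suppose $z_-, z_+ \in K$. Since $K$ is compact and connected, it contains a continuum joining $z_-$ to $z_+$. Our aim is to extract from it an arc $\gamma$ satisfying the hypotheses of Lemma~\ref{gammalemma} (or its reflection across a horizontal line) with horizontal span larger than $2a > s$. Lemma~\ref{gammalemma} then gives $\gamma \cap (\gamma + s) \neq \emptyset$, hence $K \cap (K+s) \neq \emptyset$, a contradiction.

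\textbf{Step 1 (path).} A compact connected set need not be path connected, so we first thicken. For small $\varepsilon>0$, the $\varepsilon$-neighbourhood $K_\varepsilon$ is open and connected, hence path connected. Choose $\varepsilon$ so small that:
\begin{itemize}
\item $K_\varepsilon$ still misses a slightly shrunken rectangle $Q' \subset Q$ whose horizontal width $2a'$ satisfies $s < 2a'$;
\item $z_\pm$ remain strictly outside $Q'$ horizontally.
\end{itemize}
Take a polygonal path $\gamma_0$ in $K_\varepsilon$ from $z_-$ to $z_+$.

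\textbf{Step 2 (arc over or under $Q'$).} Since $x_- < c-a$ and $x_+ > c+a$, $\gamma_0$ must cross the vertical strip $c-a' \le \operatorname{Re} z \le c+a'$. Inside the strip it avoids $Q'$, so each crossing passes above or below $Q'$. Consider the last time $\gamma_0$ is on the left edge line $\operatorname{Re} z = c-a'$ before it first reaches the right edge line $\operatorname{Re} z = c+a'$. This gives a subarc that crosses the strip, wholly above or wholly below $Q'$; it cannot pass through $Q'$, and switching sides would require leaving the strip.

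Say it passes above. Let $\ell$ be the horizontal line $\operatorname{Im} z = h$, which $z_\pm$ are near because $|y_\pm - h| < b$. Following $\gamma_0$ backwards and forwards from this crossing subarc, take the last intersection with $\ell$ before the crossing and the first intersection with $\ell$ after it. Between these two points the path lies strictly above $\ell$. Their real parts are at most $c-a'$ and at least $c+a'$, because near the strip the line $\ell$ inside the strip lies in $Q'$.

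\textbf{Step 3 (the obstacle).} The main difficulty is ensuring that $\gamma_0$ actually meets $\ell$ on both sides. The endpoints satisfy $|y_\pm - h| < b$, but they need not lie on $\ell$. The plan is to use the rectangle's vertical extent as a barrier instead of $\ell$ itself. Concretely, work with the half-plane bounded by the line $\operatorname{Im} z = h + b'$ (the top edge of $Q'$), and consider the maximal subarc of $\gamma_0$ lying strictly above this line that contains the crossing.

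Its endpoints lie on the line $\operatorname{Im} z = h + b'$, with real parts outside $[c-a', c+a']$, since the top edge inside the strip belongs to $Q'$. The endpoints do exist: $z_\pm$ lie below this line, because $y_\pm < h + b$ and we may choose $b'$ with $\max(y_\pm) < h+b' < h+b$, shrinking $\varepsilon$ accordingly.

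\textbf{Step 4 (conclusion).} This subarc satisfies the hypotheses of Lemma~\ref{gammalemma}, with horizontal span at least $2a' > s$. It therefore meets its translate by $s$, so $K_\varepsilon \cap (K_\varepsilon + s) \neq \emptyset$ for every small $\varepsilon$. Compactness then gives $K \cap (K+s) \neq \emptyset$, a contradiction. The case where the crossing passes below $Q'$ is symmetric: use the line $\operatorname{Im} z = h - b'$ and the reflected version of Lemma~\ref{gammalemma}.
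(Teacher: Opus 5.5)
There is a genuine gap in Steps 2--4. Lemma~\ref{gammalemma} needs the \emph{endpoints} of the arc to be separated, $0<s<\operatorname{Re}\gamma(1)-\operatorname{Re}\gamma(0)$. It is not enough that the arc itself has horizontal extent at least $2a'$.

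Your maximal subarc above $\operatorname{Im} z=h+b'$ containing the crossing does have its left endpoint to the left of $Q'$, since it comes before the first visit to $\operatorname{Re} z=c+a'$. But nothing forces its right endpoint to the right of $Q'$. After crossing over $Q'$, the path may turn back while staying above the line, descend to the left of $Q'$, and only later reach $z_+$ by passing \emph{below} $Q'$. Knowing that both endpoints have real parts outside $[c-a',c+a']$ does not put them on opposite sides. Your Step 2 claim that the first return to $\ell$ after the crossing has real part $\geqslant c+a'$ fails for the same reason.

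This is not just a missing justification: a left-to-left excursion can genuinely miss its translate. Take three sides of the thin parallelogram with vertices $p$, $p+T(1+i)$, $p+T(1+i)+\eta$, $p+\eta$, where $p$ and $p+\eta$ lie on the line to the left of $Q'$, $T$ is large, and $0<\eta<s$. This arc reaches far to the right of $Q'$. Yet one checks directly that it is disjoint from its translate by $s$: the slanted sides of the two arcs lie on four distinct parallel lines, and the top sides are disjoint since $\eta<s$. So in such a configuration your chosen arc yields no contradiction; the contradiction lives in the later passage below $Q'$.

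The fix is to select the excursion by hitting times of the two side blocks rather than of the vertical strip. Let $L_\pm=\{\pm(\operatorname{Re} z-c)\geqslant a,\ |\operatorname{Im} z-h|\leqslant b\}$. Let $t_+$ be the first time the path enters $L_+$ (it does, as $z_+\in L_+$), and let $t_-$ be the last time before $t_+$ that it lies in $L_-$. Taking $\varepsilon<\operatorname{dist}(K,Q)$, the thickened set misses $Q$ itself. So each of its points in the closed horizontal strip $|\operatorname{Im} z-h|\leqslant b$ lies in $L_-\cup L_+$. Hence on $(t_-,t_+)$ the path avoids the whole strip and is entirely above or entirely below it. Its endpoints then lie on a single edge line of the strip, one in $L_-$ and one in $L_+$, so their real parts differ by at least $2a>s$ and Lemma~\ref{gammalemma} applies.

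This is exactly the paper's proof. No shrinking to $Q'$ or choice of $b'$ is needed. Your thickening step and your compactness step are fine; the latter is equivalent to fixing $\varepsilon<\frac12\operatorname{dist}(K,K+s)$ at the outset.
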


\begin{proof}
Suppose we had two such points $z_-$ and $z_+$ both in $K$. Since $K$ and $Q$ are compact and $K$ is disjoint from both $K + s$ and $Q$ we can pick $\delta > 0$ such that:

\[
2\delta < \operatorname{dist}(K, K+s), \quad \text{and} \quad \delta < \operatorname{dist}(K,Q).
\]

Let $U$ be the $\delta$ neighbourhood of $K$. Since $K$ is connected, so is $U$. Moreover, we have:
\[
U \cap Q = \emptyset, \quad \text{and} \quad U \cap (U + s) = \emptyset.
\]
Choose a path $\gamma(t)$ in $U$ such that $\gamma(0) = z_-$ and $\gamma(1) = z_+$. Consider the sets:
\[
L_{\pm} = \{ z \in \mathbb{C} \mid \pm (\operatorname{Re}(z) - c) \geqslant a, |\operatorname{Im}(z) - h| \leqslant b \}.
\]

Notice that by construction $z_- \in L_-$ and $z_+ \in L_+$. In particular, this means $\gamma(0) \in L_-$ and $\gamma(1) \in L_+$. Define $t_+ = \inf\{ t \in [0, 1] \mid \gamma(t) \in L_+ \}$ and $t_- = \sup\{ t \in [0, t_+] \mid \gamma(t) \in L_-\}$.

For $t \in (t_-, t_+)$, $\gamma(t)$ is not contained in $L_- \cup L_+$. Since every element of $U$ in the closed strip belongs to one of these sets, $\gamma(t)$ lies outside the closed strip. Hence continuity implies that the whole curve $\gamma(t)$ for $t \in (t_-, t_+)$ lies either above or below the strip. Both endpoints must then lie on the same boundary component of the strip. Notice also that $|\operatorname{Re}(\gamma(t_+)) - \operatorname{Re}(\gamma(t_-))| \geqslant 2a > s$.

Thus by applying lemma~\ref{gammalemma} to the restricted path $\gamma \mid_{[t_-, t_{+} ]}$ (or its reflection), we have $\gamma([t_-, t_+]) \cap (\gamma([t_-, t_+]) + s) \neq \emptyset$. This contradicts $U \cap (U + s) = \emptyset$ and so both of $z_-$ and $z_+$ cannot lie in $K$ simultaneously.
\end{proof}

\begin{lemma}
\label{convexlemma}
Let $K_n, K \subseteq \mathbb{C}$ be compact connected sets such that the $K_n$ converge to $K$ in Hausdorff distance. Let $v_n, u \in \mathbb{C}$ be such that:
\begin{enumerate}
\item $v_n \neq 0$ and $v_n \rightarrow 0$,
\item $\frac{v_n}{|v_n|} \rightarrow u$ for some $u \in \mathbb{C}$,
\item $K_n \cap (K_n + v_n) = \emptyset$.
\end{enumerate}

Then any line parallel to $u$ meets $K$ in an interval or a point, or is disjoint from $K$.
\end{lemma}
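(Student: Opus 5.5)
We argue by contradiction, reducing to Lemma~\ref{squarelemma} applied to $K_n$ with a shift in direction $v_n$. After rotating coordinates (multiply everything by $\bar u$), we may assume $u = 1$. Since $v_n/|v_n| \to 1$, for large $n$ the vector $v_n$ is nearly horizontal. Suppose some horizontal line $\{\operatorname{Im} z = h\}$ meets $K$ in a set that is not an interval, a point, or empty. Since $K$ is compact, this means there are points $p_- = x_- + ih$ and $p_+ = x_+ + ih$ in $K$ with $x_- < x_+$, and a point $c + ih$ with $x_- < c < x_+$ not in $K$.

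\textbf{Choosing the rectangle.} Because $K$ is compact and $c+ih\notin K$, we can choose $a,b>0$ so that the closed rectangle
\[
Q=\{|\operatorname{Re} z - c|\leqslant 2a,\ |\operatorname{Im} z-h|\leqslant 2b\}
\]
is disjoint from $K$, with $c-2a > x_-$ and $c+2a<x_+$. Let $\epsilon$ be a small fraction of $\min(a,b,\operatorname{dist}(K,Q))$. By Hausdorff convergence, for large $n$ the sets $K_n$ lie within $\epsilon$ of $K$ and vice versa. Hence $K_n$ is disjoint from the slightly smaller rectangle $Q' = \{|\operatorname{Re} z - c|\leqslant a', |\operatorname{Im} z - h|\leqslant b'\}$ with $a' \in (a,2a)$ and $b'\in(b,2b)$. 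Moreover, $K_n$ contains points $z_\pm^{(n)}$ within $\epsilon$ of $p_\pm$. These satisfy $|\operatorname{Im} z_\pm^{(n)} - h| < b'$, together with $\operatorname{Re} z_-^{(n)} < c-a'$ and $\operatorname{Re} z_+^{(n)} > c+a'$.

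\textbf{Main obstacle: the shift is not horizontal.} Lemma~\ref{squarelemma} requires a real shift $s$, while $v_n$ is only asymptotically horizontal. I would fix this in one of two ways.

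\emph{Option 1: shear the picture.} Apply the real-linear map $\phi_n$ that fixes the real axis and sends $v_n$ to $|v_n|$, namely
\[
\phi_n(x+iy) = x - y\,\frac{\operatorname{Re} v_n}{\operatorname{Im} v_n}\ldots
\]
More simply, take $\phi_n(z) = \operatorname{Re} z - \beta_n \operatorname{Im} z + i\operatorname{Im} z$ with $\beta_n = \operatorname{Re} v_n/\operatorname{Im} v_n$ adjusted appropriately. Since $v_n/|v_n|\to 1$, the maps $\phi_n$ converge to the identity on compact sets. The sets $\phi_n(K_n)$ are compact, connected, and disjoint from their translate by the real number $s_n=\phi_n(v_n)$. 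The image of $Q'$ is a parallelogram that is close to $Q'$ for large $n$, so it contains a genuine rectangle with half-widths $a,b$ centered at $c+ih$. The images of $z_\pm^{(n)}$ stay on the correct sides.

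\emph{Option 2: redo the path argument directly.} Repeat the argument of Lemma~\ref{gammalemma} and Lemma~\ref{squarelemma} in coordinates adapted to $v_n$, replacing horizontal strips by strips parallel to $v_n$.

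I would use Option 1, since all the uniformity is controlled by $\phi_n \to \mathrm{id}$.

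\textbf{Conclusion.} Since $s_n \to 0$, we have $0 < s_n < 2a$ for large $n$. Lemma~\ref{squarelemma} then says that $\phi_n(K_n)$ contains at most one of the two points $\phi_n(z_\pm^{(n)})$. But it contains both, which is a contradiction. Hence every line parallel to $u$ meets $K$ in an interval, a point, or not at all.
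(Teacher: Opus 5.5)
Your overall architecture matches the paper's: argue by contradiction, find a rectangle missing $K$ with points of $K$ on both sides, transfer this to $K_n$ via Hausdorff convergence, then invoke Lemma~\ref{squarelemma}. But the step you yourself flag as the main obstacle does not work as written.

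A real-linear map that fixes the real axis pointwise and sends $v_n=\alpha+i\beta$ ($\beta\neq 0$) to $|v_n|$ cannot be invertible. From $\alpha+\beta\phi(i)=|v_n|$ we get $\phi(i)=(|v_n|-\alpha)/\beta\in\mathbb{R}$, so $\phi$ collapses the plane onto $\mathbb{R}$. Your explicit formula $\phi_n(x+iy)=(x-\beta_n y)+iy$ is a horizontal shear, and it preserves imaginary parts. Hence $\operatorname{Im}\phi_n(v_n)=\operatorname{Im}v_n$, and the shift never becomes real unless it already was. Moreover, $\beta_n=\operatorname{Re}v_n/\operatorname{Im}v_n$ diverges because $v_n/|v_n|\to 1$, so $\phi_n\not\to\mathrm{id}$ either. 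No choice of $\beta_n$ rescues a horizontal shear, so Option~1 as stated does not produce the real shift that Lemma~\ref{squarelemma} needs.

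The repair is easy, and there are two ways to do it.

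\emph{Vertical shear.} Use $\phi_n(x+iy)=x+i(y-t_nx)$ with $t_n=\operatorname{Im}v_n/\operatorname{Re}v_n\to 0$. This map is invertible and satisfies $|\phi_n(z)-z|\leqslant |t_n||z|$, so it tends to the identity uniformly on bounded sets. It sends $v_n$ to $s_n=\operatorname{Re}v_n$, which is positive for large $n$.

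\emph{Per-$n$ rotation (the paper's route).} Skip the global rotation and rotate each $K_n$ by its own $\overline{u_n}$, where $u_n=v_n/|v_n|$. Then $E_n=\overline{u_n}K_n$ is disjoint from $E_n+|v_n|$, and $E_n\to E=\overline{u}K$ because $d_H(\overline{u_n}K_n,\overline{u}K)\leqslant d_H(K_n,K)+|u_n-u|\max_{z\in K}|z|$. Your Option~2 is essentially this approach.

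With either repair, your rectangle bookkeeping and the final appeal to Lemma~\ref{squarelemma} go through. The rotation is cleaner because it is an isometry, so you do not need to fit a rectangle inside a sheared parallelogram.
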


\begin{proof}
Let $u_n = \frac{v_n}{|v_n|}$ and define the rotations $E_n = \overline{u_n} \cdot K_n$, $E = \overline{u} \cdot K$. Notice that $E_n \rightarrow E$ in Hausdorff distance. Indeed, by the triangle inequality we have:
\begin{align*}
d_H(E_n, E) &\leqslant d_H(\overline{u_n} K_n, \overline{u_n}K) + d_H(\overline{u_n}K, \overline{u}K) \\
&\leqslant d_H(K_n, K) + |u_n - u| \max_{z \in K} |z| \\
&\rightarrow 0.
\end{align*}

Since we have:
\[
K_n \cap (K_n + v_n) = \emptyset
\]
this implies:
\[
E_n \cap (E_n + \overline{u_n}v_n) = \emptyset
\]
also. Notice that $s_n = \overline{u_n}v_n \in \mathbb{R}$.

Any line parallel to $u$ becomes a horizontal line when multiplied by $\overline{u}$, hence it suffices to understand the intersection of $E$ with horizontal lines. Suppose that the intersection of $E$ with some horizontal line $L(t) = t + ih$ with constant imaginary part $h$ is not empty, not a point, and not an interval. That means that there are points $(a, h)$ and $(b, h) \in E$ such that for any $c \in (a, b)$, $c + ih \notin E$. Since $E$ is compact, this implies there is some closed rectangle $Q$ disjoint from $E$ with $(a, h)$ and $(b, h)$ on opposite sides of $Q$. Then for all $n$ large, $E_n$ must also be disjoint from $Q$ and must have points $a_n +ih_n$ and $b_n+ih'_n$ on opposite sides of $Q$ such that $a_n+ih_n \rightarrow a+ih$ and $b_n+ih'_n \rightarrow b+ih$.

Since $\overline{u_n}v_n \rightarrow 0$, lemma~\ref{squarelemma} implies for all $n$ large, $E_n \cap (E_n + \overline{u_n}v_n) \neq \emptyset$ giving a contradiction.
\end{proof}

The previous results were quite general but now we apply them to our particular family of attractors.

\begin{lemma}
\label{intlemma}
Let $\lambda \in \mathbb{D} \setminus \mathbb{R}$ and suppose $A_{\lambda}$ is connected. Suppose there is some $u \in \mathbb{C}$, $u \neq 0$, such that the intersection of any line parallel to $u$ with $A_{\lambda}$ is connected or empty. Then $A_\lambda$ has nonempty interior.
\end{lemma}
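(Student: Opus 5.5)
The plan is to use self-similarity to turn the one-directional convexity hypothesis into convexity in a second, independent direction, and then build a small parallelogram inside $A_\lambda$.

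Write $A=A_\lambda$ and recall $A=\bigcup_{j=0}^M(\lambda A+j)$. Since $0\in T_M$, iterating $f_{0,\lambda}$ gives $\lambda^k A\subseteq A$ for all $k\geqslant 0$. More generally, $\lambda^k A+t_w\subseteq A$ for every finite word $w$, where $t_w=\sum_{j<k}b_j\lambda^j$ with $b_j\in T_M$. The hypothesis says that $A$ is \emph{$u$-convex}: every line parallel to $u$ meets $A$ in a segment, a point, or not at all. Applying the similarity $z\mapsto \lambda^k z+t_w$, each piece $\lambda^kA+t_w$ is then $\lambda^k u$-convex. Because $\lambda\notin\mathbb{R}$, the directions $u$ and $\lambda u$ are not parallel.

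\textbf{Step 1.} First I show that $A$ contains a nondegenerate segment $S$ parallel to $u$. Suppose not. Then every line parallel to $u$ meets $A$ in at most one point. Hence the projection $\pi$ of $A$ along $u$ onto a transverse line is injective. Since $A$ is compact and connected, $\pi$ is a homeomorphism onto a compact interval, so $A$ is an arc. Each subset $\lambda^kA\subseteq A$ is then a connected subset of this arc, hence a subarc, and so it is also a graph over the transverse line to $u$. Pulling back by $\lambda^{-k}$, $A$ is a graph over the transverse lines to $u$ and to $\lambda^{-1}u$. I would derive a contradiction from this using the points $0,M,\lambda M,\lambda^2 M,\dots\in A$. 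An arc that is injectively projected in two independent directions, and that contains $0$ together with both $\lambda^kA$ and $\lambda^kA+M\lambda^{k}\cdot(\text{stuff})$, cannot also contain the $M+1$ overlapping translates $\lambda A+j$ that are required for connectedness. Concretely, connectedness forces $(\lambda A+j)\cap(\lambda A+j+1)\neq\emptyset$. Each such overlap is an intersection of two congruent graphs transverse to $\lambda u$ that are translates of each other by $1$. I would use a Lemma~\ref{gammalemma}-type crossing argument along direction $u$ to show that the pieces would have to be nested rather than form an arc. \textbf{This step is the one I expect to be the main obstacle}, and the argument above is what I would try first.

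\textbf{Step 2.} Given $S\subseteq A$ parallel to $u$, the piece $\lambda S+t_w$ lies in $A$ and is parallel to $\lambda u$. Choose a word $w$ so that $J=\lambda^k S+t_w$ is tiny and sits well inside the region swept by $S$. I want two points of $A$ on each line $\ell_p=p+\mathbb{R}u$ through $p\in J$, on a common side of $J$. To get this, use a second small copy $J'=\lambda^kS+t_{w'}$ from a neighbouring word, so that $J'$ is a translate of $J$ by a vector not parallel to $\lambda u$. Then for each $p\in J$, the line $\ell_p$ meets both $J$ and $J'$. By $u$-convexity of $A$, the whole segment of $\ell_p$ between these two points lies in $A$.

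\textbf{Step 3.} The union over $p\in J$ of these segments is a parallelogram spanned by the direction $u$ and the segment $J$. It has nonempty interior because $u$ and $\lambda u$ are independent, and it lies in $A$, so $\operatorname{int}A\neq\emptyset$.

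The delicate point in Steps 2 and 3 is arranging that the lines $\ell_p$ hit $J'$ for \emph{all} $p\in J$. I would handle this by taking $w'$ to differ from $w$ only in a late letter. Then $J'-J$ is a small vector of the form $\lambda^{m}\cdot(\text{nonzero integer})$, and choosing $m$ appropriately makes it non-parallel to $\lambda u$.
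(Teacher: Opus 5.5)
Your two-case structure matches the paper's: first rule out the case where every line parallel to $u$ meets $A$ in at most one point, then build a parallelogram from a segment. However, each half has a genuine gap. \textbf{Step 1} is not proved. The remarks about the translates $\lambda A+j$ being forced to be ``nested rather than form an arc'' never yield a contradiction, as you acknowledge. You already have the right ingredients: $\pi|_A$ is a homeomorphism onto an interval, and $\lambda A\subseteq A$. What is missing is a short one-dimensional dynamics argument. Rotate so that $u=1$ and let $\pi(z)=\operatorname{Im} z$. Then $f=\pi\circ(z\mapsto\lambda z)\circ(\pi|_A)^{-1}$ is a continuous injective self-map of the interval $\pi(A)$ with $f(0)=0$, because $0\in A$. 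So $f$ is strictly monotone, and $f^{\circ 2}$ is increasing with $f^{\circ 2}(0)=0$. Since $f^{\circ 2n}=\pi\circ(z\mapsto\lambda^{2n}z)\circ(\pi|_A)^{-1}$, for each $z\in A\setminus\{0\}$ the number $\operatorname{Im}(\lambda^{2n}z)$ has the same nonzero sign for all $n$. That is impossible for $\lambda\notin\mathbb{R}$: the arguments of $\lambda^{2n}z$ advance by $2\arg\lambda\notin 2\pi\mathbb{Z}$, and such an orbit cannot stay in one open half-plane bounded by $\mathbb{R}$.

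\textbf{Step 2} rests on the claim that for every $p\in J$ the line $p+\mathbb{R}u$ meets $J'$. This does not follow from $J'=J+v$ with $v$ not parallel to $J$. For literally every $p\in J$, it would force $v$ to be exactly parallel to $u$. Even for a subsegment of $J$, the component of $v$ transverse to $u$ must be smaller than the transverse extent of $J$, which is at most $|\lambda|^k|S|$. Your offset $v=d\lambda^m$ with $m<k$ has modulus at least $|\lambda|^{k-1}$, and you never control its direction relative to $u$. Choosing $m$ so that $v$ is not parallel to $J$ only secures non-degeneracy of the parallelogram, which is a different condition. (Also, $J=\lambda^kS+t_w$ is parallel to $\lambda^k u$, not $\lambda u$, so you need $\lambda^k\notin\mathbb{R}$; this fails, e.g.\ for purely imaginary $\lambda$ and even $k$.) The paper avoids translates entirely. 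With $J=\lambda I$ lying on a line $L$, it takes a cylinder $P_N(\lambda)+\lambda^{N+1}A$ that contains the midpoint of $J$ and has diameter less than half the transverse extent of $J$. This cylinder is a similar copy of $A$, which contains the non-collinear points $0,1,\lambda$, so it has a point $z_1\notin L$. A deeper cylinder $B\ni z_1$ then misses $L$, lies in the $u$-strip over $J$, and has nondegenerate transverse projection. Being connected, $B$ lies on one side of $L$, at distance along $u$ at least some $e>0$. Convexity along $u$ then fills in, at each transverse level of $B$, the segment from $J$ to $B$, giving a parallelogram inside $A$.
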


\begin{proof}
Fix some $u \in \mathbb{C}$ and assume $|u| = 1$. Consider a line $l$ parallel to $u$. Possibly rotating $A_\lambda$, $T_M$, and $u$, we can take the line to be horizontal.

First suppose that $l$ intersects $A_\lambda$ in an interval containing more than a single point, say $I = \{ x + i h \mid a \leqslant x \leqslant b \}$ for some $a < b$.

Since $\lambda \cdot A_\lambda \subseteq A_\lambda$ and $\lambda$ is not real, we have that $J = \lambda \cdot I$ is a nonhorizontal line segment contained in $A_\lambda$.

Write $J = \{ (j(t), t) \mid \alpha \leqslant t \leqslant \beta \}$, $\alpha < \beta$. Consider the midpoint $z_0 = (j(\frac{\alpha + \beta}{2}), \frac{\alpha + \beta}{2})$. Since $z_0 \in A_\lambda$ we can write $z_0 = \sum_{j=0}^\infty p_j\lambda^j$ for some $p_j \in T_M$. Let $P_n(\lambda) = \sum_{j=0}^{n} p_j\lambda^j$. Then for any $n$ we have $z_0 \in C_n := P_n(\lambda) + \lambda^{n+1}A_\lambda \subseteq A_\lambda$. Since $|\lambda|^n \operatorname{diam}(A_\lambda)$ tends to $0$, we can choose $N$ large enough so that $\operatorname{diam}(C_N) < \frac{\beta - \alpha}{2}$. Notice that since $0, 1, \lambda \in A_\lambda$, we have that $A_\lambda$ is not collinear, hence each set $C_n$ is not collinear either. In particular, this means that there is some $z_1 \in C_N$ such that $z_1 \notin L$ where $L =  \{ (j(t), t) \mid t \in \mathbb{R} \}$ is the bi-infinite line containing $J$. 

Since $z_1 \in C_N$ we can write $z_1 = \sum_{j=0}^\infty q_j\lambda^j$ for some $q_j \in T_M$ such that $q_0 = p_0, ..., q_N = p_N$. Write $Q_n(\lambda) = \sum_{j=0}^n q_j\lambda^j$ and $B_n = Q_n(\lambda) + \lambda^{n+1} A_\lambda$. Thus $B_N = C_N$ so for all $N' > N$ we have $B_{N'} \subseteq C_N$. Choose $N'$ large enough so that $z_0 \notin B_{N'}$ and:
\[
\operatorname{diam}(B_{N'}) < \operatorname{dist}(z_1, L).
\]
Then we have $L \cap B_{N'} = \emptyset$. Also since $B_{N'} \subseteq C_N$, $z_0 \in C_N$, and $\operatorname{diam}(B_{N'}) \leqslant \operatorname{diam}(C_N) < \frac{\beta - \alpha}{2}$ we have $\operatorname{Im}(B_{N'}) \subseteq \{ \alpha < y < \beta \}$.

Consider the map $\Delta(x, t) = x - j(t)$. Since $B_{N'}$ is compact and $B_{N'} \cap L = \emptyset$, we have:
\[
e = \min_{ x+it \in B_{N'}} |\Delta(x, t)| > 0.
\]

Also, since $A_\lambda$ is connected it follows $B_{N'}$ is connected, hence $\Delta(x, t)$ must have a constant sign on $B_{N'}$. Suppose it is positive. Then we have $x \geqslant j(t) + e$ for any $x + it \in B_{N'}$. Let $\operatorname{Im}(B_{N'}) = [\alpha', \beta']$. Notice $\alpha' < \beta'$ as otherwise $B_{N'}$ would be collinear. For each $t \in \operatorname{Im}(B_{N'})$ we can choose $x_t$ such that $x_t + it \in B_{N'}$.

Then we know that $B_{N'}$ contains $x_t + it$ and hence $A_\lambda$ contains $x_t + it$. On the other hand, it also contains $j(t) + it$ and $x_t \geqslant j(t) + e$. Then since we have assumed all horizontal lines intersect $A_\lambda$ in either an interval, a point, or an empty intersection, it follows $A_\lambda$ contains every point $\{ x+it \mid j(t) \leqslant x \leqslant x_t \}$. This holds for all $t \in [\alpha', \beta']$. Hence $A_\lambda$ contains at least the parallelogram $\{ j(t) + r + it \mid \alpha' \leqslant t \leqslant \beta', 0 \leqslant r \leqslant e \}$. This implies $A_\lambda$ has nonempty interior as claimed.

The above showed that a single intersection which was an interval implied $A_\lambda$ contained an interior point. It remains to rule out the case in which every horizontal line meets $A_\lambda$ in at most one point. Suppose this is the case. Consider the projection $\pi(x + iy) = y$. This map is always a continuous surjection onto its image and if horizontal lines only meet $A_\lambda$ in at most one point, this would make $\pi \mid_{A_\lambda}$ a bijection onto $\pi(A_\lambda)$. As $A_\lambda$ is a compact connected set, $\pi(A_\lambda)$ is an interval. Since multiplication by $\lambda$ sends $A_\lambda$ into itself, we have that the composition:
\[
f = \pi \circ (z \mapsto \lambda z) \circ (\pi \mid_{A_\lambda})^{-1}
\]
is a continuous injective map from an interval into itself. In particular this means it must be strictly monotonic. By taking the composition of $f$ with itself we have a map $f^{\circ 2}$ which is strictly monotonically increasing. Since $0 \in A_\lambda$ and $\pi(0) = 0$ we know $f$ (and hence $f^{\circ 2}$) fixes $0$. Repeating we get the same is true for all even compositions:
\[
f^{\circ 2n} = \pi \circ (z \mapsto \lambda^{2n}z) \circ (\pi\mid_{A_\lambda})^{-1}.
\]

In particular this means that for any $z \in A_\lambda$, $z \neq 0$, the sign of $\pi(\lambda^{2n}z)$ is constant which is impossible for $\lambda \notin \mathbb{R}$.
\end{proof}

\begin{proposition}
\label{prop1}
Let $\lambda \in \mathcal{M}_M \setminus \mathbb{R}$ such that $\operatorname{int} A_\lambda= \emptyset$. Then there exist $\epsilon, \delta > 0$ such that for every $\eta \in \mathcal{M}_M$, if $|\eta - \lambda| < \delta$ then $B(0, \epsilon) = \{ |z| < \epsilon \} \subseteq D_{\eta}$.
\end{proposition}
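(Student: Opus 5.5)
The plan is to argue by contradiction, combining the compactness argument of Lemma~\ref{convexlemma} with Lemma~\ref{intlemma}. The key observation is that $z \in D_\eta = A_\eta - A_\eta$ if and only if $A_\eta \cap (A_\eta + z) \neq \emptyset$. So a point $v \notin D_\eta$ is exactly a translation vector for which $A_\eta$ and $A_\eta + v$ are disjoint, which is hypothesis (3) of Lemma~\ref{convexlemma}.

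Suppose the conclusion fails for $\lambda$. Then for every $n$, taking $\delta = \epsilon = 1/n$, there is $\eta_n \in \mathcal{M}_M$ with $|\eta_n - \lambda| < 1/n$ and a point $v_n$ with $|v_n| < 1/n$ and $v_n \notin D_{\eta_n}$.
\begin{itemize}
\item Since $0 \in D_{\eta_n}$ trivially, we have $v_n \neq 0$.
\item Passing to a subsequence, we may assume $v_n/|v_n| \to u$ for some unit vector $u$.
\item Since $\eta_n \in \mathcal{M}_M$, each $A_{\eta_n}$ is compact and connected, by the equivalence between connectedness of $A_\eta$ and $\eta \in \mathcal{M}_M$ recalled in the introduction.
\item Fix $r$ with $|\lambda| < r < 1$. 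For large $n$ we have $|\eta_n| \leqslant r$, so Lemma~\ref{hlemma} gives $A_{\eta_n} \to A_\lambda$ in Hausdorff distance.
\item Finally, $A_{\eta_n} \cap (A_{\eta_n} + v_n) = \emptyset$ because $v_n \notin D_{\eta_n}$.
\end{itemize}
All hypotheses of Lemma~\ref{convexlemma} therefore hold with $K_n = A_{\eta_n}$ and $K = A_\lambda$. It follows that every line parallel to $u$ meets $A_\lambda$ in an interval, a point, or not at all; that is, the intersection is connected or empty.

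Next we apply Lemma~\ref{intlemma} to $\lambda$. Here $\lambda \in \mathbb{D} \setminus \mathbb{R}$, and $A_\lambda$ is connected because $\lambda \in \mathcal{M}_M$. The direction $u \neq 0$ has the line-intersection property just established. Lemma~\ref{intlemma} then gives $\operatorname{int} A_\lambda \neq \emptyset$, contradicting the hypothesis $\operatorname{int} A_\lambda = \emptyset$. Hence some pair $\epsilon, \delta > 0$ works, and the uniform ball $B(0,\epsilon) \subseteq D_\eta$ holds for all $\eta \in \mathcal{M}_M$ with $|\eta - \lambda| < \delta$.

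I expect no serious obstacle, since the geometric work is already done in Lemmas~\ref{squarelemma}, \ref{convexlemma} and \ref{intlemma}. The only points needing care are these:
\begin{itemize}
\item The negation must produce a single sequence in which the parameters $\eta_n$ and the translations $v_n$ degenerate simultaneously.
\item We use the connectedness of $A_{\eta_n}$, which is where the hypothesis $\eta_n \in \mathcal{M}_M$ is needed.
\item The $\eta_n$ must stay in a compact subdisk $|\eta| \leqslant r$, so that Lemma~\ref{hlemma} applies with a uniform constant.
\end{itemize}
The nonreality hypothesis is needed only for $\lambda$ itself, in the final appeal to Lemma~\ref{intlemma}, and not for the approximating parameters $\eta_n$.
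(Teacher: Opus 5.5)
Your proposal is correct and follows the paper's proof essentially step for step. You negate to get $\eta_n \to \lambda$ and $v_n \to 0$ with $v_n \notin D_{\eta_n}$, use Lemma~\ref{hlemma} for Hausdorff convergence, pass to a convergent direction $u$, and apply Lemma~\ref{convexlemma} and then Lemma~\ref{intlemma} to reach the contradiction. Your extra remarks (connectedness of $A_{\eta_n}$ from $\eta_n \in \mathcal{M}_M$, the uniform radius $r$ for Lemma~\ref{hlemma}, and $v_n \neq 0$ because $0 \in D_{\eta_n}$) only make explicit details the paper leaves implicit.
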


\begin{proof}
If the statement were false then for all $n$ we can choose $\eta_n \in \mathcal{M}_M$ such that $|\eta_n - \lambda| < \frac{1}{n}$ and $v_n$ such that $0 < |v_n| < 1/n$, $v_n \notin D_{\eta_n}$.

Then we have $A_{\eta_n} \cap (A_{\eta_n} + v_n) = \emptyset$, and $A_{\eta_n}$ converges to $A_{\lambda}$ in Hausdorff distance by lemma~\ref{hlemma}.

Taking a subsequence if needed we can assume the ratios $\frac{v_n}{|v_n|}$ converge to some $u$. Then by lemma~\ref{convexlemma}, $A_\lambda$ satisfies the preconditions for lemma~\ref{intlemma} contradicting the assumption that $A_\lambda$ had empty interior.
\end{proof}

\begin{corollary}
\label{cor1}
If $K \subseteq \mathcal{M}_M \setminus \mathbb{R}$ is a nonempty compact set such that for all $\lambda \in K$, $\operatorname{int} A_\lambda = \emptyset$ then there exist $\delta, \epsilon > 0$ such that for all $\eta \in \mathcal{M}_M$, if $\operatorname{dist}(\eta, K) < \delta$ then $B(0, \epsilon) \subseteq D_\eta$.
\end{corollary}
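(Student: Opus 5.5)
This follows from Proposition~\ref{prop1} by a standard compactness argument. For each $\lambda \in K$ we have $\lambda \in \mathcal{M}_M \setminus \mathbb{R}$ and $\operatorname{int} A_\lambda = \emptyset$, so Proposition~\ref{prop1} applies. It gives constants $\epsilon_\lambda, \delta_\lambda > 0$ such that every $\eta \in \mathcal{M}_M$ with $|\eta - \lambda| < \delta_\lambda$ satisfies $B(0,\epsilon_\lambda) \subseteq D_\eta$.

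Consider the open cover of $K$ by the balls $B(\lambda, \delta_\lambda/2)$, $\lambda \in K$. By compactness of $K$ we can extract a finite subcover $B(\lambda_i, \delta_{\lambda_i}/2)$, $i = 1, \dots, k$. Set
\[
\delta = \min_i \frac{\delta_{\lambda_i}}{2}, \qquad \epsilon = \min_i \epsilon_{\lambda_i}.
\]
Both are positive since the minima are over finitely many positive numbers.

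Now let $\eta \in \mathcal{M}_M$ with $\operatorname{dist}(\eta, K) < \delta$. Since $K$ is compact, the distance is attained at some $\lambda \in K$ with $|\eta - \lambda| < \delta$. This $\lambda$ lies in some $B(\lambda_i, \delta_{\lambda_i}/2)$. By the triangle inequality,
\[
|\eta - \lambda_i| < \delta + \frac{\delta_{\lambda_i}}{2} \leqslant \delta_{\lambda_i}.
\]
The choice of $\delta_{\lambda_i}$ then gives $B(0, \epsilon_{\lambda_i}) \subseteq D_\eta$, and hence $B(0,\epsilon) \subseteq D_\eta$.

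There is no real obstacle here. The only point needing care is the use of half-radii in the cover: it ensures that points near $K$, and not just points of $K$, fall within the full $\delta_{\lambda_i}$-neighbourhood of a single $\lambda_i$.
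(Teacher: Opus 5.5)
Your proof is correct and is essentially the paper's argument: a finite subcover from Proposition~\ref{prop1}, with $\epsilon = \min_i \epsilon_{\lambda_i}$. The only difference is how you get the uniform $\delta$. You use half-radius balls, while the paper covers with the full balls $B(\lambda_i,\delta_{\lambda_i})$ and sets $\delta = \operatorname{dist}(K, \mathbb{C}\setminus U)$ for their union $U$; both are the same standard Lebesgue-number trick.
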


\begin{proof}
Applying proposition~\ref{prop1} to every point in $K$ yields pairs $\epsilon_\lambda$ and $\delta_\lambda$ such that if $|\eta - \lambda| < \delta_\lambda$ then $B(0, \epsilon_\lambda) \subseteq D_\eta$. By compactness of $K$ there are finitely many points $\lambda_1, \lambda_2, ..., \lambda_n$ such that the balls $B(\lambda_i, \delta_{\lambda_i})$ cover $K$. Let $U = \bigcup_{i=1}^n B(\lambda_i, \delta_{\lambda_i})$.

Choose $\delta = \operatorname{dist}(K, \mathbb{C} \setminus U)$ and $\epsilon = \min_i \epsilon_{\lambda_i}$. Then the result immediately follows.
\end{proof}

\section{Separation Estimates}

In this section we combine the locally uniform interior region established for difference sets in corollary~\ref{cor1} with a standard Rouch\'e argument to get a criterion for nonreal polynomial roots being interior points. This criterion then allows us to prove a separation estimate for nonreal boundary points.

\begin{proposition}
\label{intprop}
Let $K \subseteq \mathcal{M}_M \setminus \mathbb{R}$ be a nonempty compact set such that for all $\lambda \in K$, $\operatorname{int} A_\lambda = \emptyset$. For any $b > 0$ there exist $\epsilon, \rho > 0$ and an integer $N_0 > 0$ such that for any $N > N_0$ and for any polynomial $P(z) = 1 + \sum_{j=1}^N p_jz^j$ with $p_j \in E_M$, if:
\begin{enumerate}
\item $\operatorname{dist}(\mu, K) < \rho$,
\item $|P'(\mu)| > b$,
\item $|P(\mu)| < \epsilon |\mu|^{N+1}$
\end{enumerate}
then $\mu \in \operatorname{int}\mathcal{M}_M$.
\end{proposition}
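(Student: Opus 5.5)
The plan is to combine Corollary~\ref{cor1} with Rouch\'e's theorem, applied at a genuine root of $P$ near $\mu$ and not at $\mu$ itself. The difficulty with working at $\mu$ directly is that the uniform ball $B(0,\epsilon)\subseteq D_\eta$ is only available for $\eta\in\mathcal{M}_M$, and we do not yet know that $\mu$ (or nearby $\eta$) lies in $\mathcal{M}_M$.

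\emph{Step 1: constants.} Apply Corollary~\ref{cor1} to $K$ to get $\delta,\epsilon_0>0$ with $B(0,\epsilon_0)\subseteq D_\eta$ whenever $\eta\in\mathcal{M}_M$ and $\operatorname{dist}(\eta,K)<\delta$. Since $K$ is a compact subset of $\mathbb{D}\setminus\mathbb{R}$, fix $r_0<1$ and $m>0$ with $m\le|z|\le r_0$ on a neighbourhood of $K$. On that neighbourhood, Cauchy estimates bound $|P''|$ and $|(z^{N+1}g)'|$ by a constant $C$, uniformly over all power series $P$ and $g$ with coefficients in $E_M$. Take $\rho<\delta/2$, and take $\epsilon\ll\epsilon_0$ and $N_0$ large in terms of $b$, $C$, $m$, $r_0$.

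\emph{Step 2: a nearby root.} Since $|P'(\mu)|>b$ and $|P''|\le C$, Rouch\'e's theorem comparing $P$ with its linearization $P(\mu)+P'(\mu)(z-\mu)$ on the circle $|z-\mu|=2\epsilon|\mu|^{N+1}/b$ produces a root $\lambda_0$ of $P$ in that disk. This needs $\epsilon|\mu|^{N+1}$ small compared with $b^2/C$, which holds once $N_0$ is large. Then $\lambda_0\in\mathcal{M}_M$, since $P$ padded with zero coefficients is an admissible power series. Also $\operatorname{dist}(\lambda_0,K)<\delta$, so $B(0,\epsilon_0)\subseteq D_{\lambda_0}$. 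Moreover $|P'|>b/2$ near $\lambda_0$.

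\emph{Step 3: filling a neighbourhood.} Let $\eta$ satisfy $|\eta-\lambda_0|<c\,|\lambda_0|^{N+1}$ for a small constant $c$. Because $P(\lambda_0)=0$ and $|P'|\le C$, we have $|P(\eta)/\eta^{N+1}|\lesssim C c\,(|\lambda_0|/|\eta|)^{N+1}$. This quantity is less than $\epsilon_0/2$ provided $c$ is small enough that $(|\lambda_0|/|\eta|)^{N+1}$ stays bounded, which is exactly why the radius scales like $|\lambda_0|^{N+1}$ (and also like $|\lambda_0|^{N+1}/N$). So we can choose a tail $g(z)=\sum_{j\ge0}q_jz^j$ with $q_j\in E_M$ and $g(\lambda_0)=-P(\eta)/\eta^{N+1}$. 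Set $G=P+z^{N+1}g$. Then $G(\eta)=\eta^{N+1}\bigl(g(\eta)-g(\lambda_0)\bigr)$, which is small of order $C|\eta-\lambda_0|\,|\eta|^{N+1}$, while $|G'|\ge b/4$ near $\eta$ because the tail's derivative is $O(N r_0^{N})$. Rouch\'e's theorem then gives a root $\eta_1$ of $G$, hence $\eta_1\in\mathcal{M}_M$, with $|\eta_1-\eta|\le\theta|\eta-\lambda_0|$ for some $\theta<1$.

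\emph{Step 4: iteration and closedness.} This is the step I expect to be the main obstacle. Step~3 only produces a point of $\mathcal{M}_M$ near $\eta$, not $\eta$ itself. I would fix it by iterating: $\eta_1\in\mathcal{M}_M$ is close to $K$, so $B(0,\epsilon_0)\subseteq D_{\eta_1}$ is now available. Repeating the construction with the root $\eta_1$ of $G$ in place of $\lambda_0$ (and deeper tails) should contract the error geometrically. This yields roots $\eta_k\in\mathcal{M}_M$ with $\eta_k\to\eta$. Alternatively, it shows that $\mathcal{M}_M$ is dense in the small disk about $\lambda_0$. Since $\mathcal{M}_M$ is closed in $\mathbb{D}$, the whole disk $\{|\eta-\lambda_0|<c|\lambda_0|^{N+1}\}$ lies in $\mathcal{M}_M$. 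Step~2 gives $|\mu-\lambda_0|\le2\epsilon|\mu|^{N+1}/b$, and $|\mu|$, $|\lambda_0|$ are comparable up to a bounded factor at power $N+1$. Choosing $\epsilon$ small relative to $cb$ therefore places $\mu$ strictly inside this disk, and hence $\mu\in\operatorname{int}\mathcal{M}_M$. The delicate bookkeeping is that every constant must be uniform in $N$ and in the changing base points: the factors $(|\lambda_0|/|\eta|)^{N+1}$ and the derivative bound for the tail are controlled only because the radii scale like $|\lambda_0|^{N+1}$ and $N>N_0$.
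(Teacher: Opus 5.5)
Your argument is correct, and its core step is the paper's. You pick a tail $H$ from the uniform ball in $D_\lambda$ at some base point $\lambda\in\mathcal{M}_M$ near $K$, so that $F=P+z^{N+1}H$ satisfies $F(\mu)=\mu^{N+1}(H(\mu)-H(\lambda))$. Rouch\'e then gives a root of $F$ much closer to $\mu$ than $\lambda$ is. Where you differ is in how this contraction is closed off.

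The paper assumes $\mu\notin\mathcal{M}_M$ and takes $\lambda$ to be a \emph{nearest} point of $\mathcal{M}_M$ to $\mu$. Since $K\subseteq\mathcal{M}_M$, we have $|\lambda-\mu|<\rho$, so $\lambda$ lies within $2\rho\le\delta$ of $K$ and Corollary~\ref{cor1} applies at $\lambda$. One round of your Step~3, done on the circle of radius $\operatorname{dist}(\mu,\mathcal{M}_M)/2$, produces a point of $\mathcal{M}_M$ strictly closer to $\mu$, which is a contradiction. Interiority then follows simply because conditions (1)--(3) define an open set of $\mu$ for fixed $P$.

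This answers both of your concerns. The base point only needs to lie in $\mathcal{M}_M$ near $K$; it need not be a root of $P$, so Step~2 is unnecessary. The nearest-point choice also replaces the iteration of Step~4.

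Your iteration is the unrolled version of the same argument, and it does work. At each stage keep $P$ fixed, reuse the bound $|P(\eta)/\eta^{N+1}|<\epsilon_0$ (a property of $\eta$ alone), and move only the base point to $\eta_k$. This gives $|\eta_{k+1}-\eta|\le\theta|\eta_k-\eta|$ with $\theta=O(r_0^{N+1}/b)<1$. No deeper tails are needed.

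In exchange for the extra bookkeeping, your route directly yields an explicit disk of radius $\asymp|\lambda_0|^{N+1}$ about a root of $P$ lying inside $\mathcal{M}_M$. That is essentially what Lemma~\ref{m0lemma} later extracts from this proposition by a separate argument.
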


\begin{proof}
Choose $\epsilon, \delta > 0$ as in corollary~\ref{cor1} applied to $K$. Choose $r \in (\max_K |\lambda|, 1)$ and define the three numbers:
\[
L_0 = \frac{M}{1-r}, \quad L_1 = \frac{M}{(1-r)^2}, \quad L_2 = \frac{2M}{(1-r)^3}.
\]
For all $|z| < r$, any power series $H(z) = \sum_{j=0}^{\infty} h_j z^j$ with $h_j \in E_M$ satisfies $|H(z)| \leqslant L_0$ and $|H'(z)| \leqslant L_1$. Moreover note $|P''(z)| \leqslant L_2$.

Choose:
\[
\rho = \min\{ \frac{\delta}{2}, \frac{r - \max_K |\lambda|}{3}, \frac{1}{2} \min_K |\lambda|, \frac{b}{2L_2} \}
\]
and $N_0$ such that for any $N \geqslant N_0$ we have:
\[
L_1 r^{N+1} < \frac{b}{8}, \quad (N+1)L_0r^N + L_1r^{N+1} < \frac{b}{4}.
\]

Suppose that $\mu \notin \mathcal{M}_M$. Then choose $\lambda \in \mathcal{M}_M$ with: 
\[
|\lambda - \mu| = \operatorname{dist}(\mu, \mathcal{M}_M) > 0.
\]
Notice $\operatorname{dist}(\mu, \mathcal{M}_M) \leqslant \operatorname{dist}(\mu, K) < \rho$. Hence $\operatorname{dist}(\lambda, K) < 2\rho \leqslant \delta$.

Since $|\frac{P(\mu)}{\mu^{N+1}}| < \epsilon$, by corollary~\ref{cor1} it belongs to $D_\lambda$ so we can choose $H(z) = \sum_{j=0}^\infty h_jz^j$ such that $H(\lambda) = -\frac{P(\mu)}{\mu^{N+1}}$.

Consider the series $F(z) = P(z) + z^{N+1}H(z)$. Note it has coefficients in $E_M$. We have that:
\[
|F(\mu)| = |\mu^{N+1}||H(\mu) - H(\lambda)| \leqslant L_1r^{N+1} \operatorname{dist}(\mu, \mathcal{M}_M) < b\frac{\operatorname{dist}(\mu, \mathcal{M}_M)}{8}.
\]

The disk $|z-\mu| \leqslant \frac{\operatorname{dist}(\mu, \mathcal{M}_M)}{2}$ is contained in $|z| < r$. For such $z$ we have:
\begin{align*}
|F'(z) - P'(\mu)| &= |P'(z) - P'(\mu) + (N+1)z^N H(z) + z^{N+1}H'(z)| \\
&\leqslant |P'(z) - P'(\mu)| + (N+1)|z|^N|H(z)| + |z|^{N+1}|H'(z)| \\
&\leqslant L_2 \frac{\operatorname{dist}(\mu, \mathcal{M}_M)}{2} + (N+1)L_0r^{N} + L_1r^{N+1} \\
&< \frac{b}{2}.
\end{align*}

On the circle $|z-\mu| = \frac{\operatorname{dist}(\mu, \mathcal{M}_M)}{2}$ we have:
\begin{align*}
|F(z) - P'(\mu)(z-\mu)| &= |F(\mu) + (z-\mu)\int_0^1 F'(\mu+t(z-\mu)) - P'(\mu)dt| \\
&\leqslant |F(\mu)| + |z-\mu| \frac{b}{2} \\
&< b\frac{\operatorname{dist}(\mu, \mathcal{M}_M)}{8} + b\frac{\operatorname{dist}(\mu, \mathcal{M}_M)}{4} \\
&= b \frac{3 \operatorname{dist}(\mu, \mathcal{M}_M)}{8}.
\end{align*}

Since we know $|P'(\mu)| > b$ this gives:
\[
|F(z) - P'(\mu)(z-\mu)|  < |P'(\mu)(z-\mu)|.
\]

Then by Rouch\'e, $F$ must have a root in $B(\mu, \frac{\operatorname{dist}(\mu, \mathcal{M}_M)}{2})$. In particular, this means $\mathcal{M}_M$ contains an element closer to $\mu$ than $\lambda$, a contradiction.

Therefore our assumption was wrong and $\mu \in \mathcal{M}_M$.

Notice that for fixed $N$ and $P$ the elements $\mu$ satisfying (1), (2), and (3) form an open set. Hence, any such $\mu$ belongs to $\mathcal{M}_M$ and thus must be an interior point.
\end{proof}

\begin{lemma}
\label{ktlemma}
Let $K \subseteq \mathcal{M}_M \setminus \mathbb{R}$ be a nonempty compact set. Suppose for every $\lambda \in K$ we have $|\lambda| \leqslant \frac{1}{\sqrt{M+1}}$. Then there exist some $\kappa, \tau > 0$ such that for any power series $P(z) = 1 + \sum_{j=1}^\infty p_jz^j$ with $p_j \in E_M$, for any $\lambda \in K$, if $|P(\lambda)| \leqslant \tau$ then $|P'(\lambda)| \geqslant \kappa$.
\end{lemma}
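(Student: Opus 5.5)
We argue by contradiction and use compactness, with Lemma~\ref{simplelemma} as the key input. Suppose no such $\kappa, \tau$ exist. Then for every $n$ there are a power series $P_n(z) = 1 + \sum_{j\geqslant 1} p^{(n)}_j z^j$ with $p^{(n)}_j \in E_M$ and a point $\lambda_n \in K$ such that
\[
|P_n(\lambda_n)| \leqslant \tfrac{1}{n}, \qquad |P'_n(\lambda_n)| < \tfrac{1}{n}.
\]
The space of coefficient sequences $E_M^{\mathbb{N}}$ is compact in the product topology, and $K$ is compact. So after passing to a subsequence we may assume $\lambda_n \to \lambda \in K$ and $p^{(n)}_j \to p_j$ for every $j$. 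Since $E_M$ is finite, the coefficients $p^{(n)}_j$ are eventually equal to $p_j$ for each fixed $j$. Set $P(z) = 1 + \sum_{j\geqslant1} p_j z^j$.

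Next we show $P_n \to P$ uniformly on a closed disk $|z| \leqslant r$ containing $K$, with $r<1$. Such an $r$ exists because $\max_K |\lambda| \leqslant 1/\sqrt{M+1} < 1$. For any $J$, once $n$ is large the first $J$ coefficients of $P_n$ and $P$ agree, so
\[
|P_n(z) - P(z)| \leqslant 2M \sum_{j > J} r^j,
\]
which is as small as we like. The same bound, applied to the derivative series, gives $P_n' \to P'$ uniformly on $|z|\leqslant r$. (Alternatively, uniform convergence of $P_n$ on a slightly larger disk gives this by Cauchy estimates.) Combining this with $\lambda_n \to \lambda$ and the uniform Lipschitz bounds $L_0, L_1$ from the proof of Proposition~\ref{intprop}, we get
\[
|P(\lambda)| \leqslant |P(\lambda) - P(\lambda_n)| + |P(\lambda_n) - P_n(\lambda_n)| + |P_n(\lambda_n)| \to 0,
\]
so $P(\lambda) = 0$. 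The same argument gives $P'(\lambda) = 0$.

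Thus $\lambda$ is a root of multiplicity at least $2$ of the power series $P$, which has coefficients in $E_M$ and constant coefficient $1$. But $\lambda \in K$ is nonreal and satisfies $|\lambda| \leqslant 1/\sqrt{M+1}$, so Lemma~\ref{simplelemma} says this root must be simple, a contradiction. Therefore there are $\kappa, \tau>0$ as claimed. Note that $P$ is not identically zero, since its constant term is $1$, so multiplicity makes sense.

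The only real care needed is the uniform convergence of the derivatives on a disk of radius $r < 1$ containing $K$. Nonrealness of $\lambda$ and the modulus bound are both preserved in the limit because $K$ is a compact subset of $\mathcal{M}_M \setminus \mathbb{R}$. This is exactly why the lemma is stated for compact $K$ with $|\lambda| \leqslant 1/\sqrt{M+1}$.
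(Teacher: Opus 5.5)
Your proposal is correct and follows essentially the same route as the paper. Both argue by contradiction, extract convergent subsequences of $\lambda_n \in K$ and of the coefficient sequences, observe that the limit power series has a double root at the nonreal limit $\lambda \in K$, and contradict Lemma~\ref{simplelemma}. You also spell out the uniform convergence of $P_n$ and $P_n'$ on $|z| \leqslant r < 1$, a step the paper leaves implicit.
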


\begin{proof}
If this were not true then we would be able to find some sequence $\lambda_n \in K$ all with $|\lambda_n| \leqslant \frac{1}{\sqrt{M+1}}$ and a sequence of power series $P_n(z)$ with coefficients in $E_M$ such that:
\[
|P_n(\lambda_n)| \leqslant \frac{1}{n}, \quad |P_n'(\lambda_n)| \leqslant \frac{1}{n}.
\]

Passing to subsequences if needed we can find a power series $P(z)$ such that the coefficients of $P_n(z)$ converge to the coefficients of $P(z)$ and we can find a point $\lambda$ such that $\lambda_n \rightarrow \lambda$. Note that since $\lambda_n \in K$ and $K$ is compact, $\lambda \in K$, so in particular the limit $\lambda$ is nonreal. Then it follows $\lambda$ is a double root of $P(z)$, contradicting lemma~\ref{simplelemma}.
\end{proof}

We now give a lower bound for polynomial values at boundary points $\lambda$.

\begin{proposition}
\label{boundryprop}
Let $K \subseteq \partial \mathcal{M}_M \setminus \mathbb{R}$ be a nonempty compact set. Suppose for every $\lambda \in K$ we have $\operatorname{int} A_\lambda = \emptyset$. Then there exists a constant $c_K > 0$ such that for any $\lambda \in K$ and every $N \geqslant 0$, if $P(z) = 1+\sum_{j=1}^N p_jz^j$ is a polynomial with $p_j \in E_M$ then:
\[
|P(\lambda)| \geqslant c_K |\lambda|^{N+1}.
\]
\end{proposition}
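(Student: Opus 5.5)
The plan is to argue by contradiction, combining Proposition~\ref{intprop} with Lemma~\ref{ktlemma}. The key fact is that points of $K$ lie on $\partial \mathcal{M}_M$, so none of them is an interior point of $\mathcal{M}_M$. Proposition~\ref{intprop}, however, produces interior points from polynomials that are too small relative to $|\mu|^{N+1}$. Taking $\mu = \lambda \in K$ itself, condition (1) of Proposition~\ref{intprop} holds trivially. So it suffices to guarantee condition (2) and handle small $N$ separately.

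\textbf{Step 1: the derivative bound.} Since $K \subseteq \partial\mathcal{M}_M$, Lemma~\ref{annuluslemma} gives $|\lambda| \leqslant \frac{1}{\sqrt{M+1}}$ for all $\lambda \in K$. Hence Lemma~\ref{ktlemma} applies to $K$ and yields $\kappa, \tau > 0$ with the following property: if $|P(\lambda)| \leqslant \tau$ for some $\lambda\in K$, then $|P'(\lambda)| \geqslant \kappa$. Set $b = \kappa/2$. Applying Proposition~\ref{intprop} to $K$ with this $b$ gives $\epsilon, \rho, N_0$.

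\textbf{Step 2: large $N$.} Let $N > N_0$ and $\lambda \in K$. Suppose $|P(\lambda)| < \epsilon' |\lambda|^{N+1}$, where $\epsilon' = \min\{\epsilon, \tau\}$. Since $|\lambda|<1$, this gives $|P(\lambda)| < \tau$, so $|P'(\lambda)| \geqslant \kappa > b$. All three hypotheses of Proposition~\ref{intprop} then hold with $\mu = \lambda$, so $\lambda \in \operatorname{int}\mathcal{M}_M$. This contradicts $\lambda \in \partial \mathcal{M}_M$, since $\mathcal{M}_M$ is closed in $\mathbb{D}$ and boundary points are not interior. Therefore $|P(\lambda)| \geqslant \epsilon' |\lambda|^{N+1}$ for all $N > N_0$.

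\textbf{Step 3: small $N \leqslant N_0$.} Here there are only finitely many polynomials $P$, and I expect this to be the main obstacle: $P(\lambda)$ could vanish at some $\lambda\in K$. I would rule this out by showing that a nonreal boundary point with empty-interior attractor cannot be a root of such a polynomial. The trick is to pad: if $P(\lambda)=0$, then for every large $N' > N_0$ the same polynomial, viewed as $1+\sum_{j=1}^{N'} p_j z^j$ with $p_j=0$ for $j>N$, satisfies $|P(\lambda)| = 0 < \epsilon'|\lambda|^{N'+1}$. Step~2 then forces $\lambda\in\operatorname{int}\mathcal{M}_M$, again a contradiction. In fact this padding argument handles every $N \leqslant N_0$ at once. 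Each such $P$ viewed at degree $N_0+1$ satisfies
\[
|P(\lambda)| \geqslant \epsilon' |\lambda|^{N_0+2} = \epsilon' |\lambda|^{N_0+1-N}|\lambda|^{N+1}.
\]
Since $K$ is compact and avoids $0$, we have $|\lambda| \geqslant m_K := \min_K|\lambda| > 0$. So it suffices to take
\[
c_K = \epsilon' \, m_K^{N_0+1},
\]
which works uniformly for all $N \geqslant 0$ and all $\lambda\in K$. One point needs checking: Proposition~\ref{intprop} is stated for polynomials of degree at most $N$ with coefficients in $E_M$, with zero top coefficients allowed. The proof indeed never uses $p_N\neq 0$, so the padding is legitimate.
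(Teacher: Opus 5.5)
Your proposal is correct and follows essentially the same route as the paper. For large $N$ you apply Lemma~\ref{ktlemma} together with Proposition~\ref{intprop} (with $b=\kappa/2$) at $\mu=\lambda$ to contradict $\lambda\in\partial\mathcal{M}_M$; for $N\leqslant N_0$ you pad with zero coefficients up to degree $N_0+1$, giving $c_K=\epsilon\min_K|\lambda|^{N_0+1}$. The only cosmetic difference is that you guarantee $|P(\lambda)|\leqslant\tau$ by shrinking $\epsilon$ to $\min\{\epsilon,\tau\}$, whereas the paper enlarges $N_0$ so that $\epsilon R^{N+1}\leqslant\tau$.
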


\begin{proof}
Apply lemma~\ref{ktlemma} and proposition~\ref{intprop} with $b = \frac{\kappa}{2}$.

Let $R = \max_K |\lambda|$. Notice $R \leqslant \frac{1}{\sqrt{M+1}}$. Increase $N_0$ if needed such that $\epsilon R^{N+1} \leqslant \tau$ for all $N \geqslant N_0$. If $|P(\lambda)| < \epsilon |\lambda|^{N+1}$ were true then by lemma~\ref{ktlemma} we would have $|P'(\lambda)| \geqslant \kappa > b$. But proposition~\ref{intprop} then implies $\lambda$ is an interior point, contradicting $K \subseteq \partial \mathcal{M}_M$.

The above proves the claim for all $N$ large. To get the claim for smaller $N$ replace any polynomial $P(z) = 1 + \sum_{j=1}^N p_jz^j$ with $1 + \sum_{j=1}^N p_jz^j + \sum_{j=N+1}^{N_0+1} 0 z^j$ and apply the above. This yields:
\[
|P(\lambda)| \geqslant \epsilon |\lambda|^{N_0+2} = \epsilon |\lambda|^{N_0 +1 - N} |\lambda|^{N+1} \geqslant \epsilon |\lambda|^{N_0+1} |\lambda|^{N+1}.
\]
Thus taking $c_K = \epsilon \min_{\lambda \in K} |\lambda|^{N_0+1}$ works for all $N$.
\end{proof}

\begin{theorem}
\label{separationthm}
For any $\lambda \in \partial \mathcal{M}_M \setminus \mathbb{R}$ there exists $c_\lambda > 0$ such that for any $N \geqslant 0$ and every $P(z) = 1 + \sum_{j=1}^N p_jz^j$ with $p_j \in E_M$, $|P(\lambda)| \geqslant c_\lambda |\lambda|^{N+1}$.
\end{theorem}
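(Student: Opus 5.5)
The proof splits according to whether $A_\lambda$ has empty interior. \emph{Case 1:} $\operatorname{int} A_\lambda = \emptyset$. Take $K = \{\lambda\}$. This is a nonempty compact subset of $\partial\mathcal{M}_M \setminus \mathbb{R}$ on which the interior hypothesis holds, so Proposition~\ref{boundryprop} gives the constant directly, with $c_\lambda = c_K$.

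\emph{Case 2:} $\operatorname{int} A_\lambda \neq \emptyset$. Then $m(A_\lambda) > 0$, so Lemma~\ref{zeroarealemma} forces $|\lambda| \geqslant \frac{1}{\sqrt{M+1}}$. On the other hand, Lemma~\ref{annuluslemma} together with $\lambda \in \partial \mathcal{M}_M$ gives $|\lambda| \leqslant \frac{1}{\sqrt{M+1}}$. Hence $(M+1)|\lambda|^2 = 1$, and I will argue through a measure-disjointness property of cylinders rather than the Rouch\'e machinery.

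For $n \geqslant 1$ and a word $a = (a_0,\dots,a_{n-1}) \in T_M^n$, write the cylinder as $C_a = \sum_{j<n} a_j\lambda^j + \lambda^n A_\lambda$. Iterating the self-similarity gives $A_\lambda = \bigcup_{a \in T_M^n} C_a$. The pieces have total measure
\[
\sum_a m(C_a) = (M+1)^n |\lambda|^{2n} m(A_\lambda) = m(A_\lambda),
\]
which equals the measure of their union. Since $m(A_\lambda) < \infty$, the pairwise intersections of distinct cylinders must have measure zero.

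Now fix a ball $B(z_0, r) \subseteq A_\lambda$; I claim $c_\lambda = r$ works. Let $P(z) = 1 + \sum_{j=1}^N p_j z^j$ with $p_j \in E_M$, and set $n = N+1$. Since $E_M = T_M - T_M$, write $p_j = a_j - b_j$ with $a_j, b_j \in T_M$, choosing $a_0 = 1$, $b_0 = 0$. The words $a$ and $b$ are then distinct, and
\[
C_a = C_b + P(\lambda).
\]
Suppose $|P(\lambda)| < r|\lambda|^{N+1}$. The cylinder $C_b$ contains the ball $B(w, r|\lambda|^{n})$ with $w = \sum_{j<n} b_j\lambda^j + \lambda^n z_0$, and $C_a$ contains its translate by $P(\lambda)$. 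These two balls have the same radius $r|\lambda|^n$ and centers closer than that radius, so they overlap in a set of positive measure. This contradicts the null overlap of distinct cylinders. Therefore $|P(\lambda)| \geqslant r|\lambda|^{N+1}$, and in particular this case needs no use of $\lambda \notin \mathbb{R}$.

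The main obstacle I expect is recognizing that the positive-interior case can occur only on the critical circle $|\lambda| = \frac{1}{\sqrt{M+1}}$, and that there exact measure counting makes the cylinders essentially disjoint. Once that is in place, the remaining steps are the routine bookkeeping above.
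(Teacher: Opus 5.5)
Your proof is correct and follows the paper's argument. The empty-interior case goes through Proposition~\ref{boundryprop}, and the remaining case lies on the critical circle $|\lambda| = \frac{1}{\sqrt{M+1}}$, where the exact measure count forces distinct cylinders $C_a, C_b$ to overlap in a null set; this is contradicted when $|P(\lambda)| < c\,|\lambda|^{N+1}$. The only differences are cosmetic: you split on $\operatorname{int} A_\lambda$ rather than on $m(A_\lambda)$, and you get the positive-measure overlap from a ball inside $A_\lambda$ rather than from the paper's $\epsilon$-neighbourhood estimate $m(B_\epsilon) < 2m(A_\lambda)$.
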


\begin{proof}
Fix $\lambda$. If $m(A_{\lambda}) = 0$ then this follows from proposition~\ref{boundryprop}. By lemma~\ref{zeroarealemma} we know when $|\lambda| < \frac{1}{\sqrt{M+1}}$ that $m(A_{\lambda}) = 0$ so it suffices to consider only $|\lambda| = \frac{1}{\sqrt{M+1}}$ and $m(A_{\lambda}) > 0$.

For each $\epsilon > 0$ we can define the open $\epsilon$ neighbourhood of $A_\lambda$ by $B_\epsilon = \{ z \mid \operatorname{dist}(z, A_\lambda) < \epsilon \}$. Choosing $\epsilon$ small enough we can assume $m(B_\epsilon) < 2m(A_\lambda)$.

For all $|v| < \epsilon$ both $A_\lambda$ and $A_\lambda + v$ are contained in $B_\epsilon$. Hence:
\begin{align*}
m(A_\lambda \cap (A_\lambda+v)) &= m(A_\lambda) + m(A_\lambda+v) - m(A_\lambda \cup (A_\lambda+v))\\
&\geqslant 2m(A_\lambda) - m(B_\epsilon)\\
&> 0.
\end{align*}
Recall that for any $n$ we can write:
\[
A_\lambda = \bigcup_{b_0, ..., b_{n-1} \in T_M} \left(\sum_{j=0}^{n-1} b_j\lambda^j + \lambda^n A_\lambda \right).
\]

There are $(M+1)^n$ sets above and each has area $|\lambda|^{2n} m(A_\lambda) = (M+1)^{-n} m(A_\lambda)$. Hence the intersection of any two sets corresponding to distinct choices $b_0, ..., b_{n-1}$ in the union has zero measure.

Consider now a polynomial $P(z) = 1 + \sum_{j=1}^N p_jz^j$ with $p_j \in E_M$. Since $E_M = T_M - T_M$ we can write $P$ as a difference of two polynomials $B(z) - C(z)$ with coefficients $b_j$ and $c_j$ in $T_M$. We can assume $b_0 = 1$ and $c_0 = 0$. Moreover for $j \geqslant 1$ we can choose $b_j = \max(p_j, 0)$ and $c_j = \max(-p_j, 0)$. Then the sets $B(\lambda) + \lambda^{N+1}A_\lambda$ and $C(\lambda) + \lambda^{N+1}A_\lambda$ occur in the above union, hence their intersection has measure zero also.

We can subtract  $C(\lambda)$ from both sets and scale by $\lambda^{-(N+1)}$ to get:
\[
m(A_\lambda \cap (A_\lambda + \frac{P(\lambda)}{\lambda^{N+1}})) = 0.
\]

However if $|\frac{P(\lambda)}{\lambda^{N+1}}| < \epsilon$ then this contradicts the above. Thus for $\epsilon$ sufficiently small we must have:
\[
|P(\lambda)| \geqslant \epsilon | \lambda |^{N+1}. \qedhere
\]
\end{proof}

Finally we obtain the first main result of the paper:

\begin{proof}[Proof of theorem~\ref{intthm}]
If this were not the case then $\lambda$ would be a boundary point. Thus by lemma~\ref{annuluslemma} we would have $|\lambda| \leqslant \frac{1}{\sqrt{M+1}}$. Hence by theorem~\ref{separationthm} we would have:
\[
|P(\lambda)| \geqslant c_\lambda |\lambda|^{N+1}
\]
which contradicts $P(\lambda) = 0$.
\end{proof}

\section{Hausdorff Measure and Dimension}

In this section we compute the Hausdorff dimension of $A_\lambda$ for $\lambda \in \partial \mathcal{M}_M \setminus \mathbb{R}$. We begin with a technical lemma to handle bounds for polynomials with starting coefficients other than $1$. The proof is quite similar to the proof of lemma~\ref{simplelemma}.
\begin{lemma}
\label{BGhelp}
Fix $| \lambda| \leqslant \frac{1}{\sqrt{M+1}}$, $\lambda \notin \mathbb{R}$. Let $M \geqslant 2$. Then there exists a constant $a > 0$ such that for any polynomial $P(z) = d + \sum_{j=1}^N p_jz^j$ with $d, p_j \in E_M$ and $d \geqslant 2$:
\[
|P(\lambda)| \geqslant a.
\]
\end{lemma}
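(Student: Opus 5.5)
The plan is to argue by contradiction and compactness, as in Lemma~\ref{ktlemma}, with the Jensen/Parseval estimate of Lemma~\ref{simplelemma} supplying the contradiction. Suppose no such $a$ exists. Then there are polynomials $P_n(z) = d_n + \sum_{j=1}^{N_n} p^{(n)}_j z^j$ with $d_n \in \{2,\dots,M\}$, $p^{(n)}_j \in E_M$, and $|P_n(\lambda)| \to 0$. Passing to a subsequence, $d_n = d$ is constant and the coefficients converge coefficientwise, by a diagonal argument. The limit is a power series $f(z) = d + \sum_{j \geqslant 1} p_j z^j$ with $p_j \in E_M$. Since $|\lambda| < 1$, the tails are uniformly small, so $f(\lambda) = 0$. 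Because $f$ has real coefficients and $\lambda \notin \mathbb{R}$, $\overline{\lambda}$ is also a root. Hence $f$ has at least two zeros, each of modulus at most $\frac{1}{\sqrt{M+1}}$.

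Next apply Jensen's formula on the circle $|z| = R$ for some $R < 1$ to be chosen. This gives
\[
\log d + 2\log \frac{R}{|\lambda|} \leqslant \frac{1}{2\pi}\int_0^{2\pi} \log|f(Re^{it})|\,dt \leqslant \frac{1}{2}\log\Bigl(d^2 + \frac{M^2R^2}{1-R^2}\Bigr),
\]
where the second inequality uses concavity and Parseval exactly as in Lemma~\ref{simplelemma}. Using $|\lambda|^2 \leqslant \frac{1}{M+1}$, this yields
\[
d^2 \bigl((M+1)R^2\bigr)^2 \leqslant d^2 + \frac{M^2R^2}{1-R^2}.
\]
Rearranged, this reads $d^2\bigl((M+1)^2R^4 - 1\bigr) \leqslant \frac{M^2R^2}{1-R^2}$.

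The main obstacle is choosing $R$ so that this inequality fails for every $M \geqslant 2$ and every $2 \leqslant d \leqslant M$. The worst case is $d = 2$, so it suffices to show $4\bigl((M+1)^2R^4 - 1\bigr) > \frac{M^2R^2}{1-R^2}$ for a suitable $R$.

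\begin{itemize}
\item With $R^2 = \frac{1}{2}$, the right side is $\frac{M^2}{1}\cdot\frac{1}{2}\cdot 2 = M^2$ and the left side is $(M+1)^2 - 4$. This fails, since $(M+1)^2 - 4 > M^2$ only when $2M > 3$; for $M = 2$ it gives $5 > 4$, which is true.
\item More carefully, I need $(M+1)^2 - 4 > M^2$, i.e.\ $2M - 3 > 0$, which holds for all $M \geqslant 2$. So $R^2 = \frac{1}{2}$ works uniformly.
\item This also shows where $M \geqslant 2$ is used: for $M = 1$ the set $\{2,\dots,M\}$ is empty, so the statement is vacuous there anyway.
\end{itemize}

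If the computation at $R^2 = \frac{1}{2}$ turned out to be off, I would instead optimize over $R^2 \in (0,1)$, possibly treating small $M$ separately. In either case the contradiction shows that no such sequence $P_n$ exists, so a uniform constant $a > 0$ exists. The constant depends only on $\lambda$ and $M$.

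One caveat: the hypothesis $d \geqslant 2$ alone does not exclude $d$ negative. If the intended statement is $|d| \geqslant 2$, the case $d \leqslant -2$ reduces to $d \geqslant 2$ by replacing $P$ with $-P$.
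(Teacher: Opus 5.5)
Your proposal is correct and follows the paper's proof. You use the same Jensen, concavity and Parseval bound at $R^2 = \frac{1}{2}$, where the contradiction $2M+1 > 4$ (i.e.\ $2M>3$) shows no power series with constant term $d \geqslant 2$ can vanish at $\lambda$, and the same coefficientwise compactness argument gives the uniform constant $a$; you merely run the compactness step first, and your closing caveat is unnecessary since $d \geqslant 2$ already excludes negative $d$.
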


\begin{proof}
First suppose for some power series $P(z) = d + \sum_{j=1}^\infty p_jz^j$ we had $P(\lambda) = 0$ with $d, p_j \in E_M$ and $d \geqslant 2$. Then for $0 < R < 1$, Jensen's formula, concavity of $\log$, and Parseval's identity give:
\[
2\log\frac{R}{|\lambda|} \leqslant \frac{1}{2} \log(1 + \frac{M^2R^2}{d^2(1-R^2)}).
\]
Taking $R = \frac{1}{\sqrt{2}}$ gives:
\[
\frac{(M+1)^2}{4} \leqslant \frac{1}{4|\lambda|^4} \leqslant 1 + \frac{M^2}{d^2} \leqslant 1 + \frac{M^2}{4}.
\]
This implies $2M+1 \leqslant 4$ which is false as $M \geqslant 2$. Thus no such power series exists.

If the statement were false then we could find a sequence $P_n(z)$ such that $|P_n(\lambda)| \rightarrow 0$. This would imply that, after possibly passing to a subsequence, we can find a limit $P(z)$ such that $P(\lambda) = 0$. This contradicts the above, hence there must exist some $a>0$ such that $|P(\lambda)| \geqslant a$ for all such polynomials $P$. 
\end{proof}

Next we give a specialized form of a result found in \cite{BG}.

\begin{lemma}
\label{BGlemma}
Fix $|\lambda| \leqslant \frac{1}{\sqrt{M+1}}$, $\lambda$ nonreal, and let $s(\lambda) = \frac{\log(M+1)}{-\log |\lambda|}$. Suppose there exists a constant $c > 0$ such that for any polynomial $P(z) = 1 + \sum_{j=1}^N p_jz^j$ with $p_j \in E_M$ we have:
\[
|P(\lambda)| \geqslant c |\lambda|^{N+1}.
\]
Then we have:
\[
0 < \mathcal{H}^{s(\lambda)}(A_\lambda) < \infty, \quad \dim_H A_\lambda = s(\lambda).
\]
Moreover in this case OSC holds for our IFS.
\end{lemma}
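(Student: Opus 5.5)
The plan is to reduce the statement to the Bandt--Graf criterion for the open set condition, and then to invoke Schief's theorem. Write the IFS as $f_s(z)=\lambda z+s$, $s\in T_M$. For a word $u=u_0u_1\cdots u_{n-1}\in T_M^n$ put $f_u=f_{u_0}\circ\cdots\circ f_{u_{n-1}}$. Then
\[
f_u(z)=\lambda^n z+\sum_{j=0}^{n-1}u_j\lambda^j .
\]
All maps have the same ratio $|\lambda|$, so the similarity dimension $s$ solves $(M+1)|\lambda|^{s}=1$, giving $s=s(\lambda)$. Since $|\lambda|\leqslant\frac{1}{\sqrt{M+1}}$ we have $s(\lambda)\leqslant 2$, so there is no obstruction from the ambient dimension.

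The version of Bandt--Graf I will use says the following: $0<\mathcal{H}^{s}(A_\lambda)$ (equivalently, by Schief, the OSC) holds if and only if there is $\varepsilon>0$ with $\|f_u^{-1}\circ f_v-\mathrm{id}\|\geqslant\varepsilon$ for all words $u\neq v$. Here the norm is the sup norm on a fixed ball containing $A_\lambda$, say of radius $\rho>0$.

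\emph{Words of different lengths.} If $|u|\neq|v|$, then $f_u^{-1}\circ f_v$ is a similarity of ratio $|\lambda|^{k}$ with $k\neq0$, $k\in\mathbb{Z}$. Two points of the ball at distance $2\rho$ then have images at distance $2\rho|\lambda|^k$. This differs from $2\rho$ by at least $2\rho(1-|\lambda|)$, so such a map stays a uniform distance (at least $\rho(1-|\lambda|)$) from the identity.

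\emph{Words of equal length $n$.} Here $f_u^{-1}\circ f_v$ is the translation by
\[
\lambda^{-n}\sum_{j=0}^{n-1}(v_j-u_j)\lambda^j .
\]
It therefore suffices to show that
\[
\Bigl|\sum_{j=0}^{n-1}(v_j-u_j)\lambda^j\Bigr|\geqslant c'|\lambda|^n
\]
for all distinct $u,v$ of length $n$, with $c'>0$ independent of $n$. Let $k$ be the first index with $u_k\neq v_k$, and after swapping $u,v$ assume $d=v_k-u_k>0$. Then the sum equals $\lambda^k Q(\lambda)$, where
\[
Q(z)=d+\sum_{j=1}^{n-1-k}q_jz^j,\qquad q_j\in E_M .
\]
There are two cases.
\begin{itemize}
\item If $d=1$, the hypothesis with $N=n-1-k$ gives $|Q(\lambda)|\geqslant c|\lambda|^{n-k}$, so the sum is at least $c|\lambda|^n$ in modulus.
\item If $d\geqslant2$, which only happens when $M\geqslant2$, Lemma~\ref{BGhelp} gives $|Q(\lambda)|\geqslant a\geqslant a|\lambda|^{n-k}$.
\end{itemize}
Hence $c'=\min(c,a)$ works (take $c'=c$ when $M=1$). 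This bound also shows $f_u\neq f_v$ for $u\neq v$, so there are no exact overlaps.

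With the separation verified, Bandt--Graf yields $\mathcal{H}^{s(\lambda)}(A_\lambda)>0$, and Schief's theorem upgrades this to the OSC. The upper bound $\mathcal{H}^{s(\lambda)}(A_\lambda)<\infty$ is standard for any self-similar set. It follows from covering $A_\lambda$ by the $(M+1)^n$ pieces $f_u(A_\lambda)$ of diameter $|\lambda|^n\operatorname{diam}A_\lambda$ and using $(M+1)^n|\lambda|^{ns}=1$. Together these give $\dim_H A_\lambda=s(\lambda)$.

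The main obstacle is matching the precise form of the Bandt--Graf criterion, in particular that only maps near the identity matter. The step that needs care is the reduction to equal-length words, in particular treating a first differing digit $d\geqslant2$ separately, which is exactly what Lemma~\ref{BGhelp} supplies. The normalisation by the power $\lambda^{k}$ at the first differing index is what makes the hypothesis, stated only for constant coefficient $1$, applicable.
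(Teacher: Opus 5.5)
Your proof is correct and follows the paper's argument. The same normalization at the first differing index reduces the separation $\bigl|\sum_{j<n}(v_j-u_j)\lambda^j\bigr|\geqslant \min(c,a)\,|\lambda|^n$ to the hypothesis when $d=1$ and to Lemma~\ref{BGhelp} when $d\geqslant 2$. The covering bound for $\mathcal{H}^{s(\lambda)}(A_\lambda)<\infty$ and Schief's theorem then finish exactly as you say.

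The one difference is that you invoke the Bandt--Graf criterion as a black box, which is why you also have to handle unequal-length words. The paper instead proves the needed implication directly. It uses the separation to bound the number of level-$n$ points $\sum_{j<n}s_j\lambda^j$ in a ball of radius $r\approx|\lambda|^m$ by $U_0(M+1)^{n-m}$, then sums over an arbitrary cover to get $\sum_i r_i^{s(\lambda)}\geqslant 1/U_0$.
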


\begin{proof}
When $M > 1$ let $C$ be the minimum of $c$ and of the $a$ from lemma~\ref{BGhelp}. When $M = 1$ take $C = c$. Let $L = \frac{M}{1-|\lambda|}$. Consider two distinct polynomials:
\[
A(z) = \sum_{j=0}^{n-1} a_jz^j, \quad B(z) = \sum_{j=0}^{n-1} b_jz^j, \quad a_j, b_j \in T_M.
\]
Assume the two polynomials first differ at index $k$ and, after possibly relabeling, assume that $d = a_k-b_k > 0$. Thus:
\[
A(z) - B(z) = z^k(d + \sum_{j=1}^{n-1-k} d_jz^j), \quad d, d_j \in E_M.
\]
Then we have:
\[
|A(\lambda) - B(\lambda)| \geqslant C |\lambda|^{n}.
\]
In particular this means no nonzero polynomial with coefficients in $E_M$ has $\lambda$ as a root. Consider:
\[
S_n = \{ \sum_{j=0}^{n-1} s_j\lambda^j \mid s_j \in T_M \}.
\]
Different choices of coefficients $s_j$ yield different values. Thus $|S_n| = (M+1)^n$. Also notice $S_n \subseteq A_\lambda$.

Fix $0 < r < 1$ and consider $m$ such that $|\lambda|^m \leqslant r < |\lambda|^{m-1}$. Fix a base point $z$ and consider the ball $B(z, r)$. For any $n \geqslant m$, any point:
\[
x = \sum_{j=0}^{n-1} s_j \lambda^j \in S_n \cap B(z, r)
\]
can be truncated to give a point
\[
x_0 = \sum_{j=0}^{m-1} s_j\lambda^j \in S_m.
\]
Since
\[
|x - x_0| \leqslant L |\lambda|^m \leqslant L r
\]
we have $x_0 \in B(z, (1 + L)r)$. The different values of $x_0$ that occur this way (for a fixed $m$) must all be at least $Cr|\lambda|$ apart. Thus the disks centred at each $x_0$ of radius $\frac{Cr|\lambda|}{2}$ are disjoint and they lie inside $B(z, (1 + L + \frac{C|\lambda|}{2})r)$. Comparing areas gives an upper bound on the number of such $x_0$ as $U_0 = (1 + \frac{2(1+L)}{C|\lambda|})^2$.

Given $m$ coefficients there are $(M+1)^{n-m}$ extensions to $n$ coefficients. Hence:
\[
| S_n \cap B(z, r)| \leqslant U_0 (M+1)^{n-m}.
\]

Cover $A_\lambda$ by a finite number of balls $B(z_i, r_i)$, $r_i < 1$, $i=1, 2, ..., N$. Choose $m_i$ so that $|\lambda|^{m_i} \leqslant r_i < |\lambda|^{m_i-1}$ and let $n \geqslant \max_i m_i$. Then the above implies:
\[
(M+1)^n \leqslant \sum_{i=1}^N |S_n \cap B(z_i, r_i)| \leqslant U_0 \sum_{i=1}^N (M+1)^{n-m_i}.
\]
Dividing both sides by $(M+1)^n$ gives:
\[
1 \leqslant U_0 \sum_{i=1}^N (M+1)^{-m_i} .
\]
Since we can write $(M+1) = |\lambda|^{-s(\lambda)}$, using the fact that $|\lambda|^{m_i} \leqslant r_i$ this gives:
\[
1 \leqslant U_0 \sum_{i=1}^N r_i^{s(\lambda)}.
\]

Since $A_\lambda$ is compact the above also holds for a countable covering provided the radii of the balls are all strictly less than $1$. Hence we get:
\[
\mathcal{H}^{s(\lambda)}(A_\lambda) > 0.
\]

The fact that $\mathcal{H}^{s(\lambda)}(A_\lambda)$ is finite is fairly well known. Indeed, let $D = \operatorname{diam}(A_\lambda)$. For each $n$ we have:
\[
A_\lambda = \bigcup_{x \in S_n} (x + \lambda^n A_\lambda). 
\]
This is a covering of $A_\lambda$ by $(M+1)^n$ sets of diameter $|\lambda|^n D$. Since $(M+1)^n(|\lambda|^nD)^{s(\lambda)} = D^{s(\lambda)}$, we have $\mathcal{H}^{s(\lambda)}(A_\lambda) < \infty$.

Since $0 < \mathcal{H}^{s(\lambda)}(A_\lambda) < \infty$ we get $\dim_H A_\lambda = s(\lambda)$. Moreover since $\mathcal{H}^{s(\lambda)}(A_\lambda) > 0$, \cite{Sc} implies the IFS satisfies the OSC.
\end{proof}

\begin{proof}[Proof of theorem~\ref{hthm}]
By lemma~\ref{annuluslemma} any boundary point must satisfy $|\lambda| \leqslant \frac{1}{\sqrt{M+1}}$. Thus the theorem follows by combining theorem~\ref{separationthm} and lemma~\ref{BGlemma}.
\end{proof}

\section{Lebesgue Measure of the Boundary}

Before proving our final theorem we need one more technical lemma:

\begin{lemma}
\label{m0lemma}
Let $\lambda \in \partial \mathcal{M}_M \setminus  \mathbb{R}$ and $|\lambda | < \frac{1}{\sqrt{M+1}}$.

Then there exists $a, C > 0$ and an integer $N_0$ such that for any $N > N_0$, there is a $\lambda_N$ satisfying:
\[
|\lambda_N - \lambda | < C|\lambda|^{N+1}, \quad B(\lambda_N, a|\lambda|^{N+1}) \subseteq \operatorname{int} \mathcal{M}_M.
\]
\end{lemma}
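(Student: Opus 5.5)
Since $\lambda\in\mathcal{M}_M\setminus\mathbb{R}$ and $|\lambda|<\frac{1}{\sqrt{M+1}}$, lemma~\ref{zeroarealemma} gives $m(A_\lambda)=0$, so $\operatorname{int}A_\lambda=\emptyset$. Let $K$ be a small closed disk around $\lambda$ intersected with $\mathcal{M}_M$, chosen to avoid $\mathbb{R}$ and to stay inside $|z|<\frac{1}{\sqrt{M+1}}$. Then every point of $K$ also has attractor of measure zero. The plan is to apply lemma~\ref{ktlemma} and proposition~\ref{intprop} to $K$ with $b=\kappa/2$, obtaining $\epsilon,\rho,N_0$. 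Because $\lambda\in\mathcal{M}_M$, there is a power series $F(z)=1+\sum_{j\ge1}p_jz^j$ with $F(\lambda)=0$. For each $N$ we truncate it to $P_N(z)=1+\sum_{j=1}^N p_jz^j$. Then $|P_N(\lambda)|\le L_0|\lambda|^{N+1}$, where $L_0=\frac{M}{1-|\lambda|}$, and $P_N\to F$ uniformly near $\lambda$. By lemma~\ref{simplelemma}, $F'(\lambda)\neq 0$, so $|P_N'|\ge |F'(\lambda)|/2$ on a fixed small disk around $\lambda$ once $N$ is large.

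We take $\lambda_N$ to be the root of $P_N$ near $\lambda$. A Rouch\'e or Newton-type argument, comparing $P_N$ with its linearization at $\lambda$, places a simple root $\lambda_N$ of $P_N$ with $|\lambda_N-\lambda|\le \frac{2|P_N(\lambda)|}{|F'(\lambda)|}(1+o(1))\le C|\lambda|^{N+1}$. This gives the first conclusion.

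For the second conclusion, take $\mu$ with $|\mu-\lambda_N|<a|\lambda|^{N+1}$. Then $|P_N(\mu)|\le 2|F'(\lambda)|\,a|\lambda|^{N+1}$, since $|P_N'|$ is bounded near $\lambda$. We also have $|\mu|\ge|\lambda|-(C+a)|\lambda|^{N+1}$. Hence $|\mu|^{N+1}\ge |\lambda|^{N+1}(1-(C+a)|\lambda|^N)^{N+1}\ge \tfrac12|\lambda|^{N+1}$ for large $N$, because $N|\lambda|^N\to0$. Choosing $a$ with $4|F'(\lambda)|a<\epsilon$ therefore gives $|P_N(\mu)|<\epsilon|\mu|^{N+1}$. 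For $N$ large we also get $|P_N'(\mu)|\ge |F'(\lambda)|/2$, and we can arrange $|F'(\lambda)|/2>b$ by taking $b=\min(\kappa,|F'(\lambda)|)/4$ from the start. Finally, $\operatorname{dist}(\mu,K)\le|\mu-\lambda|<\rho$. Proposition~\ref{intprop} then puts $\mu\in\operatorname{int}\mathcal{M}_M$, so the whole ball $B(\lambda_N,a|\lambda|^{N+1})$ lies in $\operatorname{int}\mathcal{M}_M$.

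The main obstacle is bookkeeping the constants so that $a$ and $C$ are independent of $N$. The key point is the comparison $|\mu|^{N+1}\asymp|\lambda|^{N+1}$: perturbations of size $|\lambda|^{N+1}$ change $|\mu|^{N+1}$ only by a bounded factor, since the relative error is $(N+1)O(|\lambda|^N)\to 0$. The remaining steps are the simplicity of the root (lemma~\ref{simplelemma}) and the uniformity supplied by proposition~\ref{intprop} over the compact set $K$.
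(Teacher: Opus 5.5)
Your proposal is correct and follows the paper's proof essentially step by step: truncate the vanishing series, use lemma~\ref{simplelemma} to keep $|P_N'|$ near $|F'(\lambda)|$, bound $|\lambda_N-\lambda|$ by a multiple of $|P_N(\lambda)|$, compare $|\mu|^{N+1}$ with $|\lambda|^{N+1}$, and then apply proposition~\ref{intprop}. The differences are cosmetic. The paper simply takes $K=\{\lambda\}$ and $b=|P'(\lambda)|/4$, so your enlarged $K$ and the appeal to lemma~\ref{ktlemma} are unnecessary, though harmless. Your Rouch\'e step makes explicit the existence of the nearby root that the paper takes for granted by choosing $\lambda_N\to\lambda$.
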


\begin{proof}
Let $P(z) = 1 + \sum_{j=1}^\infty p_jz^j$ with $p_j \in E_M$ and $P(\lambda) = 0$. Consider $P_N(z) = 1 + \sum_{j=1}^N p_jz^j$ and consider a root $\lambda_N$ of $P_N$ such that $\lambda_N \rightarrow \lambda$.

By lemma \ref{simplelemma} we have $d = |P'(\lambda)| \neq 0$.

As $|\lambda| < \frac{1}{\sqrt{M+1}}$, the set $A_\lambda$ has empty interior. Applying proposition \ref{intprop} with $K = \{ \lambda \}$, $b = d/4$ we get $\epsilon, \rho > 0$.

Choose a disk $U \subseteq B(\lambda, \rho)$ centred at $\lambda$ small enough that for every $N$ large and any $z \in U$:
\[
|P'_N(z) - P'(\lambda)| < \frac{d}{2}.
\]
Integrating along the line segment from $\lambda_N$ to $\lambda$ (which is contained in $U$ since $\lambda_N$ and $\lambda$ are) as in the proof of proposition \ref{intprop} gives:
\[
|P_N(\lambda) - P'(\lambda)(\lambda - \lambda_N)| \leqslant \frac{d}{2}|\lambda - \lambda_N|.
\]
Then the reverse triangle inequality gives:
\[
\frac{d}{2}|\lambda_N - \lambda| \leqslant |P_N(\lambda)| \leqslant \frac{M}{1-|\lambda|} |\lambda|^{N+1}.
\]

Thus taking $C = \frac{2M}{d(1-|\lambda|)}$ gives the claimed inequality.

Choose $a > 0$ such that $3da < \epsilon$. Then for all $N$ large, we have:
\[
B(\lambda_N, a|\lambda|^{N+1}) \subseteq U.
\]
Moreover, for every $z$ in this ball we have:
\[
|z-\lambda| \leqslant (C+a)|\lambda|^{N+1},
\]
and hence:
\[
|z|^{N+1} \geqslant |\lambda|^{N+1}(1 - (C+a)|\lambda|^N)^{N+1}.
\]
We know that $(N+1)|\lambda|^N \rightarrow 0$ and so:
\[
|z|^{N+1} \geqslant \frac{1}{2}|\lambda|^{N+1}.
\]

Our choice of $U$ implies $|P'_N(z) - P'(\lambda)| < \frac{d}{2}$, therefore we have $\frac{d}{2} < |P'_N(z)| < \frac{3d}{2}$ for $z \in U$. Thus:
\[
|P_N(z)| \leqslant \frac{3d}{2} |z-\lambda_N| < \frac{3da}{2}|\lambda|^{N+1} < \frac{\epsilon}{2} |\lambda|^{N+1} \leqslant \epsilon |z|^{N+1}.
\]

Also, $|P'_N(z)| > \frac{d}{2} > b$ and since $z \in U$, $|z-\lambda| < \rho$. Thus proposition \ref{intprop} implies every point in the ball $B(\lambda_N, a|\lambda|^{N+1})$ is an interior point proving the second claim.

\end{proof}

\begin{proof}[Proof of Theorem \ref{thm3}]
Recall that any point $\lambda \in \partial \mathcal{M}_M$ satisfies $|\lambda| \leqslant \frac{1}{\sqrt{M+1}}$. Since $\mathbb{R}$ and the circle $|\lambda| = \frac{1}{\sqrt{M+1}}$ both have zero measure it suffices to consider $\lambda \in \partial \mathcal{M}_M \setminus \mathbb{R}$ with $|\lambda| < \frac{1}{\sqrt{M+1}}$.

By lemma \ref{m0lemma} we can find $a, C > 0$ and points $\lambda_N$ such that for all $N$ large:
\[
|\lambda_N - \lambda| \leqslant C|\lambda|^{N+1}, \quad B(\lambda_N, a|\lambda|^{N+1}) \subseteq \operatorname{int} \mathcal{M}_M.
\]

Let $r_N = (C+a)|\lambda|^{N+1}$. Then:
\[
B(\lambda_N, a|\lambda|^{N+1}) \subseteq B(\lambda, r_N).
\]
Notice also the smaller ball is disjoint from $\partial \mathcal{M}_M$. Comparing measures gives:
\begin{align*}
\frac{m(\partial \mathcal{M}_M \cap B(\lambda, r_N))}{m(B(\lambda, r_N))} &\leqslant 1 - \frac{\pi a^2|\lambda|^{2N+2}}{\pi(C+a)^2|\lambda|^{2N+2}} \\
&= 1 - \frac{a^2}{(C+a)^2} \\
&< 1.
\end{align*}

As $r_N \rightarrow 0$ this gives that $\lambda$ is not a point with Lebesgue density $1$ of $\partial \mathcal{M}_M$. Thus, by the Lebesgue density theorem, the collection of such $\lambda$ forms a set of measure zero.

\end{proof}

\end{document}